\documentclass[12pt,reqno]{amsart}
\usepackage{amsthm, amscd, amsfonts, amssymb, graphicx, color}
\usepackage[bookmarksnumbered, colorlinks, plainpages,linkcolor=green,anchorcolor=black,citecolor=blue,urlcolor=black]{hyperref}
\usepackage{mathrsfs}
\usepackage{amssymb}
\usepackage{enumerate}
\usepackage{exscale}
\usepackage[all]{xy}
\usepackage{relsize}
\usepackage{pdfsync}
\numberwithin{equation}{section}

\makeatletter
\@namedef{subjclassname@2020}{%
  \textup{2020} Mathematics Subject Classification}
\makeatother

\usepackage[includemp,body={398pt,550pt},footskip=30pt,%
            marginparwidth=60pt,marginparsep=10pt]{geometry}

\newtheorem{theorem}{Theorem}[section]
\newtheorem{lemma}[theorem]{Lemma}

\theoremstyle{definition}

\numberwithin{equation}{section}

\newcommand{\D}{\mathbb{D}}
\newcommand{\dd}{\mathcal{D}}
\newcommand{\h}{\mathscr{H}}
\newcommand{\C}{\mathbb{C}}
\newcommand{\T}{\mathbb{T}}
\newcommand{\Z}{\mathbb{Z}}
\newcommand{\R}{\mathcal{R}}
\newcommand{\E}{\mathbb{E}}
\newcommand{\Pp}{\mathbb{P}}

\begin{document}
\title[Littlewood-type theorem for random Dirichlet series]{A converse to Littlewood-type theorem for random Dirichlet series}
\date{October 13, 2026.
}
\author[J. Chen, M. He, and M. Wang]{Jiale Chen, Min He, and Maofa Wang}
\address{Jiale Chen, School of Mathematics and Statistics, Shaanxi Normal University, Xi'an 710119, China.}
\email{jialechen@snnu.edu.cn}

\address{Min He, School of Mathematics and Statistics, Wuhan University, Wuhan 430072, China.}
\email{hm.math@whu.edu.cn}

\address{Maofa Wang, School of Mathematics and Statistics, Wuhan University, Wuhan 430072, China.}
\email{mfwang.math@whu.edu.cn}

\thanks{Chen is supported by National Natural Science Foundation (No. 12501170), Young Talent Fund of Association for Science and Technology in Shaanxi (No. 20260506) and the Fundamental Research Funds for the Central Universities (No. GK202601003) of China. Wang is supported by National Natural Science Foundation (No. 12571145) of China.}

\subjclass[2020]{30B50, 30H10, 30H20, 46E15}
\keywords{random Dirichlet series, Hardy space, mixed norm space, Littlewood-type theorem}


\begin{abstract}
  \noindent This article investigates random Dirichlet series within the framework of Hardy and mixed norm spaces of Dirichlet series. We completely characterize when the random symbol space of the Hardy space $\mathscr{H}^p$ coincides with $\mathscr{H}^2$. In the setting of mixed norm spaces $\mathscr{H}^{p,q}_{\alpha}$, the corresponding characterizations for the random symbol space of $\mathscr{H}^{p,q}_{\alpha}$ coinciding with $\mathscr{H}^{2,q}_{\alpha}$ are given when $q\geq4$, and some sufficient and necessary conditions are established when $q<4$. We also obtain some integrability results when Dirichlet series belong to the corresponding random symbol spaces.
\end{abstract}
\maketitle


\section{Introduction}
\allowdisplaybreaks[4]
Let $f(s)=\sum_{n=1}^{\infty}a_nn^{-s}$ be a Dirichlet series that converges at some point $s_0\in\C$, and let $\{X_n\}_{n\geq1}$ be a sequence of random variables on a probability space $(\Omega,\mathcal{F},\Pp)$. The random Dirichlet series associated with $f$ and $\{X_n\}_{n\geq1}$ is defined by
$$\R_X f(s):=\sum_{n=1}^{\infty}a_nX_nn^{-s}.$$
We simply write $\R f$ for random Dirichlet series when the random variable sequence is clear from the context. If the random variables $X_n$, $n\geq1$, are independent and identically distributed (i.i.d.), then by Kolmogorov's zero-one law, the abscissa of pointwise convergence of $\R f$ is almost surely a constant; see, for example, \cite[p. 43]{Ka85}.

The probabilistic theory of Dirichlet series has a long history, going back to works of Bohr--Jessen \cite{BJ}, Hartman \cite{Ha39}, Salem--Zygmund \cite{SZ}, among others; see also Kahane \cite{Ka85}. We refer the reader to \cite{BM} for the natural boundary problem, to \cite{A19,AFM} for investigations into the distribution of real zeros, and to \cite{CDS,CMST,KQ} for almost sure convergence of random Dirichlet series. In the modern function-space approach, Hardy spaces and Bergman-type spaces of Dirichlet series are usually studied through the Bohr lift, which transforms Dirichlet series into analytic functions on the infinite-dimensional polydisc or, equivalently, on the infinite torus. This viewpoint reveals deep connections between analytic number theory, infinite-dimensional harmonic analysis and probability. In this note, following the line of \cite{CGW}, we are going to investigate random Dirichlet series in the context of Hardy and Bergman spaces of Dirichlet series.



We next recall the definitions of Hardy and Bergman spaces of Dirichlet series. Let $\dd$ be the space of Dirichlet series $\sum_{n=1}^{\infty}a_nn^{-s}$ that converge on some half-plane $\C_{\theta}:=\{s\in\C:\Re s>\theta\}$, $\theta\in\mathbb{R}$, and let $\mathcal{P}$ be the space of Dirichlet polynomials $\sum_{n=1}^Na_nn^{-s}$. The Hardy space $\mathscr H^2$, introduced by Hedenmalm, Lindqvist and Seip
\cite{HLS}, is the space of Dirichlet series
$f(s)=\sum_{n=1}^{\infty}a_nn^{-s}$ satisfying
\[
\|f\|_{\mathscr H^2}^2:=\sum_{n=1}^{\infty}|a_n|^2<\infty.
\]
For general $0<p<\infty$, Bayart \cite{Ba02} and Bondarenko et al. \cite{BBSS} defined Hardy spaces $\h^p$ by means of the Bohr lift. By the fundamental theorem of arithmetic, each
$n\in\mathbb N$ has a unique representation
$n=\mathfrak p^{\alpha(n)}=\mathfrak p_1^{\alpha_1(n)}\mathfrak p_2^{\alpha_2(n)}\cdots$,
where $\mathfrak{p}=\{\mathfrak{p}_k\}_{k\geq1}$ is the increasing sequence of prime numbers, and $\alpha(n)=(\alpha_1(n),\alpha_2(n),\ldots)$
is an eventually null sequence of non-negative integers. The Bohr lift of $f(s)=\sum_{n=1}^{\infty}a_nn^{-s}$ is the formal Fourier series
$$
\mathcal Bf(z):=\sum_{n=1}^{\infty}a_nz^{\alpha(n)},
\qquad z=(z_1,z_2,\ldots)\in\T^{\infty}.
$$
Here we use the multi-index notation $z^{\alpha(n)}:=z_1^{\alpha_1(n)}z_2^{\alpha_2(n)}\cdots$. For $0<p<\infty$, the Hardy space $\mathscr H^p$ is the completion of $\mathcal{P}$ with respect to the (quasi-)norm
\[
\|P\|_{\mathscr H^p}
:=\left(\int_{\T^{\infty}}|\mathcal BP(z)|^p\,dm_{\infty}(z)\right)^{1/p},\quad P\in\mathcal{P},
\]
where $m_{\infty}$ is the Haar measure on the countably infinite-dimensional torus $\T^{\infty}$ coinciding with the product of the normalized Lebesgue measure $m$ on the unit
circle $\T$. Then the Bohr lift $\mathcal{B}$ is an isometric isomorphism from $\h^p$ onto the analytic Hardy space $H^p(\T^{\infty})$. Recall that every element in $\h^p$ is a Dirichlet series that converges uniformly on $\C_{1/2+\epsilon}$ for any $\epsilon>0$, which means that $\h^p$ can be viewed as a space of analytic functions on the half-plane $\C_{1/2}$. Furthermore, $\h^q$ is contractively embedded into $\h^p$ for $0<p\leq q<\infty$; see \cite{Ba02,BBSS,DGMS,HLS,QQ} for more information of these spaces.

Given $0<p,q<\infty$ and $\alpha>-1$, the mixed norm space $\h^{p,q}_{\alpha}$ of Dirichlet series is defined as the completion of $\mathcal{P}$ with respect to the (quasi-)norm
$$\|P\|_{\h^{p,q}_{\alpha}}:
=\left(\int_0^{+\infty}\|P_{\sigma}\|_{\h^p}^q
d\mu_{\alpha}(\sigma)\right)^{1/q},\quad P\in\mathcal{P},$$
where $P_{\sigma}(\cdot):=P(\cdot+\sigma)$ denotes the horizontal translation of $P$, and
$$d\mu_{\alpha}(\sigma):=\frac{2^{\alpha+1}}{\Gamma(\alpha+1)}
\sigma^{\alpha}e^{-2\sigma}d\sigma.$$
These spaces were defined in \cite{CGW}. In particular, when $p=q$, we obtain the Bergman space $\mathcal{A}_{\alpha}^p:=\h^{p,p}_{\alpha}$ introduced in \cite{BL}. It is noteworthy to mention that $\h^{p,q}_{\alpha}$ is a space of Dirichlet series converging on $\C_{1/2}$ as well; see \cite[Theorem 2.3]{CGW}.

Recently, Chen, Guo and Wang \cite{CGW} investigated Littlewood-type theorems for random Dirichlet series in the setting of Hardy and mixed norm spaces, which exhibits some different phenomena compared with the case of random power series developed in \cite{CFL,Li26,Li30}. Let $\mathscr{X}\subset\dd$ be a (quasi-)Banach space of Dirichlet series containing all Dirichlet polynomials. Then by the Hewitt--Savage zero-one law (see \cite[Theorem 2.5.4]{Du19}), for any $f\in\dd$,
$$\Pp(\R f\in\mathscr{X})\in\{0,1\}.$$
This prompts us to define the symbol space $\mathscr{X}_{\star}$ by
$$\mathscr{X}_{\star}:=\{f\in\mathcal{D}:\Pp(\R f\in\mathscr{X})=1\}.$$
For the three canonical randomization methods (Rademacher, Steinhaus, and Gaussian), the symbol spaces $(\h^p)_{\star}$ and $(\h^{p,q}_{\alpha})_{\star}$ were characterized in \cite{CGW}. More precisely, it was proved in \cite[Theorem 3.2]{CGW} that, if $\{X_n\}_{n\geq1}$ is i.i.d. and $X_1$ is one of Rademacher, Steinhaus or Gaussian $N(0,1)$ variable, then for $0<p<\infty$,
\begin{equation}\label{Hardy-Littlewood}
(\h^p)_{\star}=\h^2,
\end{equation}
and for $0<p,q<\infty$ and $\alpha>-1$,
\begin{equation}\label{mixednorm-Littlewood}
(\h^{p,q}_{\alpha})_{\star}=\h^{2,q}_{\alpha}.
\end{equation}

For random analytic functions in the unit disc, recent works have focused on converse of Littlewood-type theorem \cite{CCFL,CL}, i.e. to find conditions for general randomization methods such that the corresponding characterizations for symbol spaces remain the same as in the three canonical randomization methods. The purpose of the present paper is to develop the corresponding converse of Littlewood-type theorem for random Dirichlet series. More precisely, the central questions of this paper are the following.
\begin{itemize}
	\item[(A)] For i.i.d. random variable sequences $\{X_n\}_{n\geq1}$, how to characterize those $X_1$'s such that \eqref{Hardy-Littlewood} or \eqref{mixednorm-Littlewood} holds?
	\item[(B)] If \eqref{Hardy-Littlewood} or \eqref{mixednorm-Littlewood} holds, can we obtain stronger integrability information?
\end{itemize}

It is important to emphasize that random Dirichlet series are not merely random power series with the variable formally replaced by $n^{-s}$. There are several essential differences.

Recall first that for a power series
$$g(\xi)=\sum_{n=0}^{\infty}b_n \xi^n$$
converging in the unit disk $\D$, and for a sequence $\{Y_n\}_{n\geq0}$ of random variables, the associated random analytic function is
$$\R_{Y}g(\xi)
:=\sum_{n=0}^{\infty}b_n Y_n \xi^n.$$
The classical random power series setting is therefore based on the geometry of the unit disc and on the natural grading by the degree $n$.

For random Dirichlet series, the situation is substantially different.

\begin{itemize}
	\item [(1)] \textbf{Different convergence geometry.}
	For random power series, convergence is governed by a radius of convergence. In contrast, random Dirichlet series are governed by several abscissae: pointwise convergence, uniform convergence, boundedness, and so on. Thus the natural zero-one laws concern random half-planes of convergence rather than random discs of convergence.
	
	\item [(2)] \textbf{Infinite-dimensional structure via the Bohr lift.}
	Under the Bohr correspondence, if
	$n=\mathfrak p^{\alpha(n)}$, then the term $n^{-s}$ corresponds to the monomial $z^{\alpha(n)}$. Consequently, a Dirichlet series becomes a power series in countably many variables. Randomizing the coefficients $a_n$ of a Dirichlet series therefore corresponds to randomizing monomial coefficients in an infinite-dimensional analytic setting, not merely randomizing the coefficients of a one-variable power series according to degree.
	
	\item [(3)] \textbf{Arithmetic structure of the frequencies.}
	The frequencies $\log n$ in a Dirichlet series are not equally spaced. Their arithmetic correlations play a crucial role. In particular, randomization over the integers $n\geq1$ is much richer than randomization over a lacunary subsequence such as $n=2^k$. The latter can be reduced to a one-variable random power series, but it only captures a very thin part of the full Dirichlet series problem.
	
	\item [(4)] \textbf{Different function-space geometry.}
	The classical Hardy space $H^p(\D)$ is built from boundary values on the one-dimensional torus $\mathbb{T}$. By contrast, the Hardy space $\h^p$ of Dirichlet series is naturally realized on the infinite torus $\mathbb{T}^{\infty}$ via the Bohr lift. Mixed norm spaces $\h^{p,q}_{\alpha}$ involve horizontal translations $f(\cdot+\sigma)$ and integration with respect to a weighted measure on $[0,\infty)$. Hence probabilistic estimates must be combined with infinite-dimensional harmonic analysis, Dirichlet polynomial approximation, and arithmetic properties of the integers.
\end{itemize}

A useful bridge between the two settings is obtained by considering the subsequence $n=2^k$. If $Y_k=X_{2^k}$, then the Bohr lift relates the corresponding random Dirichlet series to a one-variable random power series. This explains why any Littlewood-type theorem for random Dirichlet series should imply the corresponding Littlewood-type theorem for certain random power series. However, this implication cannot be reversed: the powers of $2$ form a very thin subset of the positive integers and do not capture the full arithmetic and infinite-dimensional structure of Dirichlet series.

We now state our results. The first one answers the question (A) in the Hardy space setting.

\begin{theorem}\label{main1}
Let $0<p<\infty$, and let $\{X_n\}_{n\geq 1}$ be a sequence of non-constant i.i.d. random variables.
\begin{enumerate}
	\item If $0<p<2$, then $(\h^p)_{\star}=\h^2$ if and only if $X_1\in L^2(\Omega)$.
	\item If $2<p<\infty$, then $(\h^p)_{\star}=\h^2$ if and only if $X_1\in L^2(\Omega)$ and $\E X_1=0$.
\end{enumerate}
\end{theorem}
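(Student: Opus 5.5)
The plan is to prove the two directions separately and to treat $p<2$ and $p>2$ in parallel. Throughout, the random series is handled through its Bohr lift $\mathcal{B}\R f(z)=\sum_n a_nX_nz^{\alpha(n)}$, and we use that every monomial $z^{\alpha(n)}$ is a character of $\T^\infty$. We also use that $\h^2\subset\h^p$ for $p<2$, and that $\h^p\subset\h^2$ for $p>2$.

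\emph{Sufficiency.} We symmetrize. Let $\{X_n'\}$ be an independent copy of $\{X_n\}$ and $\{\epsilon_n\}$ an independent Rademacher sequence. Then $X_n-X_n'$ has the same distribution as $\epsilon_n(X_n-X_n')$.

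For $p<2$, first consider $\E X_1=0$. Conditioning on $(X_n-X_n')$ and applying the Rademacher case of \eqref{Hardy-Littlewood} (with the Khintchine--Kahane inequality on $H^p(\T^\infty)$ via Fubini) gives
\[
\E\|\R f-\R' f\|_{\h^p}^p\lesssim \E\Big(\sum|a_n|^2|X_n-X_n'|^2\Big)^{p/2}\le \big(2\|X_1\|_2^2\|f\|_{\h^2}^2\big)^{p/2}.
\]
Hence $\R f-\R' f\in\h^p$ almost surely. By Fubini and the zero--one law, for almost every fixed value of $X'$ we get $\R f\in\h^p$ almost surely, up to the deterministic series $\R'f$; the centering argument that removes this shift is the same one used for the mean in the next sentence. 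If $\E X_1=\mu\ne0$, write $X_n=\mu+(X_n-\mu)$. The deterministic part is $\mu f\in\h^2\subset\h^p$, and the centered part is handled as above. So $\h^2\subset(\h^p)_\star$ whenever $X_1\in L^2$.

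For $p>2$ with $\E X_1=0$, the same argument works, now using the contraction principle and Khintchine--Kahane for all $p$: the Rademacher average of $\sum a_n c_n z^{\alpha(n)}$ has $L^p(\Omega;\h^p)$ norm comparable to $(\sum|a_nc_n|^2)^{1/2}$. This comparability is exactly what the Rademacher case of \eqref{Hardy-Littlewood} gives quantitatively. Alternatively, I would directly compute $\E\int|\sum a_nX_nz^{\alpha(n)}|^p\,dm_\infty$ by conditioning on $|X_n|$ and using a Rosenthal/Marcinkiewicz--Zygmund type bound on the torus.

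For the reverse inclusion $(\h^p)_\star\subset\h^2$:
\begin{itemize}
\item If $p\ge2$, then $\R f\in\h^p\subset\h^2$ gives $\sum|a_n|^2|X_n|^2<\infty$ almost surely. Since $X_1$ is non-constant, $\Pp(|X_n|>\delta)\ge c>0$ for some $\delta>0$, and the Borel--Cantelli/three-series theorem then forces $\sum|a_n|^2<\infty$.
\item If $p<2$, symmetrize: $\R f-\R' f$ is a Rademacher series with coefficients $a_n(X_n-X_n')$ lying in $\h^p$. By the Rademacher characterization this gives $\sum|a_n|^2|X_n-X_n'|^2<\infty$ almost surely. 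Since $X_1-X_1'$ is non-degenerate (as $X_1$ is non-constant), the same three-series argument yields $f\in\h^2$.
\end{itemize}

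\emph{Necessity.} Suppose $X_1\notin L^2$. Then I would choose $a_n$ supported on primes, $f=\sum_k a_k\mathfrak p_k^{-s}$. Its Bohr lift is linear in independent variables, so on this subspace all $\h^p$ norms are comparable to the $\ell^2$ norm (Khintchine on $\T^\infty$). Thus $\R f\in\h^p$ if and only if $\sum|a_k|^2|X_{\mathfrak p_k}|^2<\infty$. Choose $\{a_k\}\in\ell^2$ with $\sum|a_k|^2|X_k|^2=\infty$ almost surely. This is possible because, by the three-series theorem, $\sum_k|a_k|^2\E\min(|a_k|^2|X_1|^2,1)=\infty$ can be arranged when $\E|X_1|^2=\infty$; for instance, take $a_k=t_k$ with blocks adapted to the tail of $|X_1|$. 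This gives $f\in\h^2\setminus(\h^p)_\star$.

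If $p>2$, $X_1\in L^2$ and $\mu=\E X_1\ne0$, then $\R f=\mu f+\R_{X-\mu}f$, and the second term lies in $\h^p$ by sufficiency. So $(\h^p)_\star=\h^2$ would force $\h^2\subset\h^p$, which fails: take $f=\sum_n c_n2^{-ns}$ with $\sum|c_n|^2<\infty$ but $\sum c_n\xi^n\notin H^p(\D)$, possible since $p>2$.

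\emph{Main obstacle.} The main obstacle is the sufficiency direction for $p>2$ with only $L^2$ integrability. The Rademacher comparison conditioned on $X$ gives a bound by $\sum|a_n|^2|X_n|^2$, which is finite almost surely, but one needs the Khintchine--Kahane constant on $H^p(\T^\infty)$ to be uniform in dimension. I would first try to obtain this by applying Kahane's inequality in the Banach space $L^p(\T^\infty)$ for the Rademacher sum, together with the equivalence, for coefficients $b_n$, between $\E\|\sum\epsilon_nb_nz^{\alpha(n)}\|_{\h^p}$ and $(\sum|b_n|^2)^{1/2}$. That equivalence in turn follows from Fubini and the scalar Khintchine inequality pointwise in $z$, since $|z^{\alpha(n)}|=1$.
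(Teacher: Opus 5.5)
Your reverse inclusion $(\h^p)_\star\subset\h^2$ is sound. You reduce to $\sum|a_n|^2|X_n|^2<\infty$ a.s. (directly for $p\ge2$, and after symmetrization plus the Rademacher case for $p<2$), then apply the three-series theorem; this even works without $X_1\in L^2$. Your proof that $X_1\in L^2$ is necessary, via prime-supported $f$ on which all $\h^p$ norms are equivalent to the $\ell^2$ norm, is also sound. Both are legitimate alternatives to the paper's route through Lemma~\ref{h2-characterization} and the reduction to $H^p(\D)$ along $n=2^k$ (Lemma~\ref{necessity}). (Your three-series expression has a stray factor $|a_k|^2$; what you actually need is Lemma~\ref{L1-integrable} applied to $|X_1|^2$.) The genuine gap is the sufficiency $\h^2\subset(\h^p)_\star$ for $p>2$, $\E X_1=0$. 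Your proof that $\E X_1=0$ is necessary also relies on this step.

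\textbf{The desymmetrization step fails.} From $\R f-\R'f\in\h^p$ a.s. you fix $\omega'$ and get $\R f-g\in\h^p$ a.s. with $g=(\R'f)(\omega')$. To conclude $\R f\in\h^p$ you need $g\in\h^p$, which is the claim itself. The analogy with subtracting $\mu f$ does not help: that works only because $\mu f\in\h^2\subset\h^p$, true for $p<2$, whereas for $p>2$ you only know $g\in\h^2$. The step never uses $\E X_1=0$, and without centering the conclusion is false. For $X_n=1+\epsilon_n$ one has $\R f=f+\R_\epsilon f\notin\h^p$ a.s. whenever $f\in\h^2\setminus\h^p$, even though $\R f-\R'f\in\h^p$ a.s.

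\textbf{The other routes you mention do not fix it.} Estimating $\E\|\R f\|_{\h^p}^p$ by Rosenthal or Marcinkiewicz--Zygmund bounds needs $X_1\in L^p$; already for $f\equiv1$ this quantity equals $\E|X_1|^p$. The obstacle you single out, dimension-free Khintchine constants, is not one: Fubini together with $|z^{\alpha(n)}|=1$ gives it immediately.

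\textbf{What is missing} is a quantitative desymmetrization on partial sums $S_{N,M}=\sum_{n=N}^Ma_nX_nn^{-s}$, using a moment of order at most $2$. Since $\E X_n'=0$ and $p\ge1$, Minkowski and Jensen in $\omega'$ give $\E\|S_{N,M}\|_{\h^p}^2\le\E\|S_{N,M}-S'_{N,M}\|_{\h^p}^2$. Now insert Rademacher signs and use $\E_\epsilon\|\sum\epsilon_nb_nn^{-s}\|_{\h^p}^2\le(\E_\epsilon\|\sum\epsilon_nb_nn^{-s}\|_{\h^p}^p)^{2/p}\lesssim\sum|b_n|^2$; this is the type-$2$ property the paper uses. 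The result is $\E\|S_{N,M}\|_{\h^p}^2\lesssim\E|X_1|^2\sum_{n=N}^M|a_n|^2$, so the partial sums of $\R f$ converge in $L^2(\Omega;\h^p)$. For $p<2$ no symmetrization is needed at all: $\E\sum|a_nX_n|^2<\infty$ gives $\R f\in\h^2\subset\h^p$ a.s.
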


The next result establishes some integrability under the almost sure membership condition.

\begin{theorem}\label{main2}
Let $0<p,q<\infty$ and let $\{X_n\}_{n\geq1}$ be a sequence of i.i.d. random variables. Suppose that $f\in(\h^p)_{\star}$ and $X_1\in L^t(\Omega)$ with $t=\max\{2,q\}$.
\begin{enumerate}
	\item If $0<p\leq 2$, then $\E\|\R f\|_{\h^p}^q<\infty$.
	\item If $2<p<\infty$ and $\E X_1=0$, then $\E\|\R f\|_{\h^p}^q<\infty$.
\end{enumerate}
\end{theorem}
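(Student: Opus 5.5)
The plan is to reduce the integrability question to moment estimates in $L^2$ and then upgrade them by a Kahane-type inequality or by symmetrization. A first observation: for $0<p\le 2$ we have $\|\R f\|_{\h^p}\le \|\R f\|_{\h^2}$. The hypothesis $f\in(\h^p)_\star$ should force $f\in\h^2$ once $X_1\in L^2$ is non-constant. Because $X_1\in L^t$ with $t\ge 2$, we do have $X_1\in L^2$, and for $p<2$ Theorem \ref{main1}(1) gives $(\h^p)_\star=\h^2$. For $p=2$ we have $f\in(\h^2)_\star$ directly, and the same argument as in Theorem \ref{main1} (or \cite{CGW}) gives $f\in\h^2$. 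Here one should dispose of the degenerate case where $X_1$ is constant: then $\R f=cf$, and the conclusion is trivial provided $\R f\in\h^p$ almost surely, which means $cf\in\h^p$ with deterministic norm. The same remark applies to case (2).

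\textbf{Case (1).} It then suffices to show $\E\|\R f\|_{\h^2}^q=\E\bigl(\sum_n|a_n|^2|X_n|^2\bigr)^{q/2}<\infty$. If $q\le 2$, Jensen's inequality gives the bound $\bigl(\E|X_1|^2\sum|a_n|^2\bigr)^{q/2}$. If $q>2$, Minkowski's inequality in $L^{q/2}(\Omega)$ applied to the sum of the nonnegative variables $|a_n|^2|X_n|^2$ gives
\[
\Bigl\|\sum_n|a_n|^2|X_n|^2\Bigr\|_{L^{q/2}}\le \sum_n|a_n|^2\,\||X_1|^2\|_{L^{q/2}}=\|X_1\|_{L^q}^2\|f\|_{\h^2}^2 .
\]
This uses exactly $X_1\in L^q$, so case (1) is immediate.

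\textbf{Case (2).} Here $2<p<\infty$ and $\E X_1=0$. By Theorem \ref{main1}(2), $(\h^p)_\star=\h^2$, so $f\in\h^2$. Let $\{\epsilon_n\}$ be an independent Rademacher sequence. Since the $X_n$ are centered and independent, the standard symmetrization inequality (the convex-function version, valid in the Banach space $\h^p$ for Dirichlet polynomials and then extended) gives
\[
\E\|\R_X f\|_{\h^p}^q\le 2^q\,\E\Bigl\|\sum a_n\epsilon_nX_nn^{-s}\Bigr\|_{\h^p}^q ,
\]
at least for $q\ge1$; for $q<1$ we first reduce to $q=1$ via Hölder. Conditioning on $X$ and applying \cite[Theorem 3.2]{CGW} in its quantitative form, via Khintchine--Kahane together with the Fubini argument on $\T^\infty$, we get
\[
\E_\epsilon\Bigl\|\sum a_nX_n\epsilon_nn^{-s}\Bigr\|_{\h^p}^q\lesssim\Bigl(\sum|a_n|^2|X_n|^2\Bigr)^{q/2}.
\]
To justify the last step: $\E_\epsilon\|\cdot\|_{\h^p}^p=\int_{\T^\infty}\E_\epsilon\bigl|\sum a_nX_n\epsilon_nz^{\alpha(n)}\bigr|^p\,dm_\infty\asymp\bigl(\sum|a_nX_n|^2\bigr)^{p/2}$ by Khintchine, and Kahane's inequality lets us pass between the $q$-th and $p$-th moments. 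Taking the expectation over $X$ then reduces to the case (1) computation, which is finite since $X_1\in L^{\max\{2,q\}}$.

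The main technical obstacle is making the symmetrization and Kahane steps rigorous for the infinite series in the quasi-Banach or Banach space $\h^p$, in particular that $\sum a_nX_nn^{-s}$ is a genuine $\h^p$-valued limit. The plan here is to work with the partial sums $S_N$ and prove the uniform bound $\sup_N\E\|S_N\|^q<\infty$. Since $\R f\in\h^p$ almost surely, we identify $\R f$ as the almost sure limit; this is automatic for $p\ge1$ by the Itô--Nisio theorem for symmetric sums after symmetrization, and in general by coefficient-wise convergence. Fatou's lemma then transfers the bound to $\R f$.
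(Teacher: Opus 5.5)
Your proposal is correct and follows essentially the paper's route. You reduce to $f\in\h^2$ via Theorem \ref{main1} (and Lemma \ref{h2-characterization} for $p=2$), bound $\E\bigl(\sum|a_nX_n|^2\bigr)^{q/2}$ by Jensen or Minkowski, and for $p>2$ combine symmetrization with Khintchine--Kahane, which the paper does via Lemma \ref{p2} and the independent copy $\widetilde{X}$. The one soft spot is your limiting step: It\^o--Nisio applied after symmetrization does not by itself give almost sure convergence of the original non-symmetric partial sums. The cleanest fix is to apply your estimates to blocks $\sum_{n=N}^{M}$, which shows the partial sums are Cauchy in $L^q(\Omega;\h^p)$; the limit is $\R f$ by comparing coefficients, exactly as in the paper.
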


The proofs of Theorems \ref{main1} and \ref{main2} are given in Section \ref{Hardy-case}.

We now turn to the mixed norm space setting. In the case $q\geq4$, we have the following complete characterization of when \eqref{mixednorm-Littlewood} holds.

\begin{theorem}\label{main3}
Let $0<p<\infty$, $4\leq q<\infty$, $\alpha>-1$, and let $\{X_n\}_{n\geq1}$ be a sequence of non-constant i.i.d. random variables.
\begin{enumerate}
	\item If $0<p\leq2$, then $(\h^{p,q}_{\alpha})_{\star}=\h^{2,q}_{\alpha}$ if and only if $X_1\in L^q(\Omega)$.
	\item If $2<p<\infty$, then $(\h^{p,q}_{\alpha})_{\star}=\h^{2,q}_{\alpha}$ if and only if $X_1\in L^q(\Omega)$ and $\E X_1=0$.
\end{enumerate}
\end{theorem}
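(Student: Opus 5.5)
Necessity and sufficiency are handled separately. Throughout we use Theorems \ref{main1} and \ref{main2} for the fibrewise Hardy-space information, and the structure $\|f\|_{\h^{2,q}_\alpha}^q=\int_0^\infty\bigl(\sum_n|a_n|^2n^{-2\sigma}\bigr)^{q/2}d\mu_\alpha(\sigma)$.

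\textbf{Sufficiency.} Assume $X_1\in L^q(\Omega)$, with $\E X_1=0$ when $p>2$. Since $q\ge4>2$, we have $X_1\in L^2$.
\begin{enumerate}
\item For $f\in\h^{2,q}_\alpha$, we will show $\E\|\R f\|_{\h^{p,q}_\alpha}^q<\infty$. By Fubini, it suffices to bound $\E\|(\R f)_\sigma\|_{\h^p}^q\lesssim \|f_\sigma\|_{\h^2}^q$ uniformly in $\sigma$. This is exactly the quantitative form of Theorem \ref{main2} with $t=\max\{2,q\}=q$. Concretely, for $p\le 2$ we use $\|\cdot\|_{\h^p}\le\|\cdot\|_{\h^2}$. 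We then bound $\E(\sum|a_n|^2|X_n|^2n^{-2\sigma})^{q/2}$ by Rosenthal / Minkowski in $L^{q/2}$:
\[
\Bigl\|\sum_n|a_n|^2n^{-2\sigma}|X_n|^2\Bigr\|_{L^{q/2}(\Omega)}\le \|X_1\|_{L^q}^2\sum_n|a_n|^2n^{-2\sigma}.
\]
For $p>2$ we write $\|\cdot\|_{\h^p}^q$ via the Bohr lift, apply Fubini, and use Khintchine-type/Rosenthal inequalities for the centred independent sum $\sum a_nX_nn^{-\sigma}z^{\alpha(n)}$ at each $z$. This yields $\E|\cdot|^{\max(p,q)}\lesssim(\sum|a_n|^2n^{-2\sigma})^{\max(p,q)/2}+\sum|a_n|^{\max(p,q)}n^{-\max(p,q)\sigma}\E|X_1|^{\max(p,q)}$; if $p>q$ we first reduce via symmetrization and Kahane's inequality so that only the $L^q$-moment is needed. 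Integrating in $\sigma$ gives $\R f\in\h^{p,q}_\alpha$ a.s.
\item Conversely, if $\R f\in\h^{p,q}_\alpha$ a.s., we need $f\in\h^{2,q}_\alpha$. We first reduce to symmetric variables by taking $X_n-X_n'$ with an independent copy; non-constancy ensures $X_1-X_1'$ is nontrivial. For a symmetric sequence, the contraction principle combined with a Paley--Zygmund argument compares with Rademacher randomization, and then \eqref{mixednorm-Littlewood} applies. For $p>2$ and $\E X_1=0$, the difference $\R_X f-\R_{X'}f$ lies in $\h^{p,q}_\alpha$ a.s. If instead $\E X_1\ne0$ and $p>2$, we still obtain membership of $f$ from the symmetrized series.
\end{enumerate}

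\textbf{Necessity of $X_1\in L^q$.} This step is the main obstacle and is where $q\ge4$ enters. Suppose $\E|X_1|^q=\infty$. We will build $f\in\h^{2,q}_\alpha$ with $\R f\notin\h^{p,q}_\alpha$ a.s. using monomial-type blocks: choose distinct $n_k$ (e.g.\ large primes, so the terms are orthogonal and pointwise analysable) with coefficients $a_k$. A single term gives
\[
\|a_kX_kn_k^{-s}\|_{\h^{p,q}_\alpha}^q=|a_k|^q|X_k|^q\int n_k^{-q\sigma}d\mu_\alpha\asymp|a_k|^q|X_k|^q(\log n_k)^{-\alpha-1}.
\]
For sparse frequencies the $\h^{2,q}_\alpha$ norm of $f$ behaves like $\sum_k|a_k|^q(\log n_k)^{-\alpha-1}$ when the $\log n_k$ grow geometrically, since the translations decouple. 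The $\h^{p,q}_\alpha$ norm of $\R f$ dominates, up to constants via a lower bound on each dyadic range of $\sigma$, the same sum with $|X_k|^q$ inserted. Choosing $b_k=|a_k|^q(\log n_k)^{-\alpha-1}$ with $\sum b_k<\infty$ but $\sum_k\Pp(|X_1|^q b_k>1)=\infty$, which is possible exactly when $\E|X_1|^q=\infty$, Borel--Cantelli gives divergence a.s. The delicate part is justifying the decoupled lower bound for $\h^p$ with small $p$; this is where $q\ge4$ is needed, to control cross terms in $(\sum|c_k|^2n_k^{-2\sigma})^{q/2}$.

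\textbf{Necessity of $\E X_1=0$ for $p>2$.} Take $f\in\h^{2,q}_\alpha\setminus\h^{p,q}_\alpha$, which exists since $p>2$. Write $\R f=\E X_1\cdot f+\R_{X-\E X}f$. The second term lies in $\h^{p,q}_\alpha$ a.s. by the sufficiency part, so $\R f\notin\h^{p,q}_\alpha$ a.s. whenever $\E X_1\ne0$.
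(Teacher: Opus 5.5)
\textbf{Where your proposal has genuine gaps.} The inclusion $\h^{2,q}_{\alpha}\subset(\h^{p,q}_{\alpha})_{\star}$ is fine; your Rosenthal/Kahane route is a harmless variant of the paper's Lemma \ref{p2} plus Lemma \ref{sigma-integral}. The mean-zero argument is the paper's. The step that fails as written is the necessity of $X_1\in L^q$.

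You assert that one can pick $\{b_k\}\in l^1$ with $\sum_k\Pp(|X_1|^qb_k>1)=\infty$ exactly when $\E|X_1|^q=\infty$. This is false. If $\Pp(|X_1|^q>t)=t^{-1}$ for $t\geq1$, then $\E|X_1|^q=\infty$, yet $\sum_k\Pp(|X_1|^qb_k>1)=\sum_k\min\{1,b_k\}\leq\sum_kb_k<\infty$ for every summable $\{b_k\}$. Borel--Cantelli only works when $\sup_{t}t\,\Pp(|X_1|^q>t)=\infty$. For such weak-$L^1$ tails, the divergence of $\sum_kb_k|X_k|^q$ comes from accumulated truncated means. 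For example, $b_k=1/(k\log^2k)$ gives $\sum_kb_k\E\min\{|X_1|^q,b_k^{-1}\}\asymp\sum_k1/(k\log k)=\infty$. So you need Kolmogorov's three-series theorem. That is exactly Lemma \ref{L1-integrable} applied to $|X_n|^q$, which is how the paper concludes in Theorem \ref{mixed-nece}.

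Your reduction itself is a valid substitute for the paper's $\sum a_n2^{-2^ns}$ and \cite[Lemma 4.2]{CGW}: one prime per block $I_k$, Khintchine for the independent coordinates $z_j$, and Theorem \ref{log-dyadic-characterization}. It works for every $0<q<\infty$. With one frequency per block there are no cross terms, so $q\geq4$ plays no role here, contrary to your claim.

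The paper actually uses $q\geq4$ in the inclusion $(\h^{p,q}_{\alpha})_{\star}\subset\h^{2,q}_{\alpha}$: Theorem \ref{mixed-suff2} assumes $X_1\in L^4$, and $L^q\subset L^4$. Here your sketch names ingredients but gives no mechanism. The contraction principle only lets you divide by $|X_n|$ on $\{n:|X_n|\geq c\}$, which leaves a random subseries. Paley--Zygmund needs a quantity with controlled moments, and a.s. membership of an infinite series does not supply one.

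The missing idea is to work blockwise.
\begin{enumerate}
\item After symmetrizing, Lemma \ref{mixed-symmetric} reduces the problem to $\R f\in\h^{2,q}_{\alpha}$ a.s., which depends only on $|a_nX_n|$.
\item By Theorem \ref{log-dyadic-characterization}, this reads $\sum_k2^{-k(\alpha+1)}Y_k^{q/2}<\infty$ a.s., where $Y_k=\sum_{n\in I_k}|a_n|^2|X_n|^2$.
\item Each $I_k$ is finite, so $A_k=\sum_{n\in I_k}|a_n|^2<\infty$, and the $Y_k$ are independent.
\item Paley--Zygmund with $\E|X_1|^4<\infty$ gives $\Pp(Y_k\geq cA_k)\geq\delta>0$ uniformly in $k$.
\item Independence across $k$, via the three-series theorem again, forces $\sum_k2^{-k(\alpha+1)}A_k^{q/2}<\infty$, i.e. $f\in\h^{2,q}_{\alpha}$.
\end{enumerate}
Applying Paley--Zygmund to the bounded variables $1_{\{|X_n|\geq c\}}$ instead of $|X_n|^2$ would even remove the need for the fourth moment. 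Without some such step, your proof of the reverse inclusion is incomplete.
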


The restriction $q\geq4$ in Theorem \ref{main3} is natural, where fourth-order and higher moment estimates are typically needed to control the relevant square functions, logarithmic dyadic decomposition and integral means. In the case $q<4$, we can establish some sufficient and necessary conditions for \eqref{mixednorm-Littlewood}.

\begin{theorem}\label{main4}
Let $0<p<\infty$, $0<q<4$, $\alpha>-1$, and let $\{X_n\}_{n\geq1}$ be a sequence of non-constant i.i.d. random variables.
\begin{enumerate}
	\item If $0<p\leq2$ and $(\h^{p,q}_{\alpha})_{\star}=\h^{2,q}_{\alpha}$, then $X_1\in L^q(\Omega)$.
	\item If $2<p<\infty$, $0<q<2$ and $(\h^{p,q}_{\alpha})_{\star}=\h^{2,q}_{\alpha}$, then $X_1\in L^q(\Omega)$.
	\item If $2<p<\infty$, $2\leq q<4$ and $(\h^{p,q}_{\alpha})_{\star}=\h^{2,q}_{\alpha}$, then $X_1\in L^q(\Omega)$ and $\E X_1=0$.
\end{enumerate}
\end{theorem}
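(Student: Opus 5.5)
Theorem \ref{main4} only asserts necessary conditions, so I will test the hypothesis $(\h^{p,q}_{\alpha})_{\star}=\h^{2,q}_{\alpha}$ on explicit Dirichlet series and read off the moment conditions. The plan has two parts: first show $X_1\in L^q(\Omega)$ in all three cases, then show $\E X_1=0$ when $p>2$ and $2\le q<4$. I expect the second part to be the main obstacle.

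\emph{Moment condition.} I will use the monomial-type series $f(s)=\sum_k a_k m_k^{-s}$, supported on a sparse set of integers $m_k$ with prescribed growth, for instance $m_k=2^{k}$ or products of distinct primes. For these, $\|f_\sigma\|_{\h^p}$ is comparable to the $\ell^2$ norm of the shifted coefficients, at least in the lacunary ($2^k$) case via Khintchine/Paley-type inequalities for lacunary series. Membership of $f$ in $\h^{2,q}_\alpha$ then becomes a weighted $\ell^q$-type condition on $(a_k)$. Using $\int_0^\infty m^{-2\sigma}d\mu_\alpha(\sigma)\asymp (\log m)^{-(\alpha+1)}$ for large $m$, I will choose $|a_k|^q$ so that $f\in \h^{2,q}_\alpha$. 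For the random series, the single-term contributions give a lower bound: if $X_1\notin L^q$, a Borel--Cantelli argument makes infinitely many $|a_kX_{m_k}|$ exceed a prescribed size. Then $\|(\R f)_\sigma\|_{\h^p}\ge |a_kX_{m_k}|m_k^{-\sigma}$, because the $\h^p$ norm dominates each coefficient when $p\ge1$; for $p<1$, I will restrict to the lacunary case and use the one-variable $H^p$ coefficient bound $|b_n|\lesssim n^{1/p-1}\|g\|_{H^p}$, or Hardy's inequality. This would force $\R f\notin\h^{p,q}_\alpha$ almost surely, contradicting $f\in(\h^{p,q}_\alpha)_\star$. The divergence is arranged by choosing $a_k$ so that $\sum_k\Pp(|X_1|^q>c_k)=\infty$ exactly when $\E|X_1|^q=\infty$. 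In effect this is the discrete tail test $\sum 2^k\Pp(|X|>2^{k/q})$, suitably weighted by $\mu_\alpha$. This part follows the analogous argument of \cite{CCFL,CL} transferred through the Bohr lift and the $2^k$ subsequence. It also covers case (2) with $q<2$.

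\emph{Mean-zero condition.} Suppose $p>2$, $2\le q<4$ and $\E X_1=\mu\ne0$. Write $\R f=\mu f+\R_{X-\mu}f$. Since $X_1\in L^q\subset L^2$, the centered part is controlled by the already-known sufficiency direction, so $\R_{X-\mu} f\in\h^{2,q}_\alpha$ almost surely. Hence $(\h^{p,q}_\alpha)_\star=\h^{2,q}_\alpha$ would force $\h^{2,q}_\alpha\subset\h^{p,q}_\alpha$, which I would refute by exhibiting $f\in\h^{2,q}_\alpha\setminus\h^{p,q}_\alpha$. The natural candidate is built from the non-equivalence of $\h^2$ and $\h^p$ norms on Dirichlet polynomials of long length, for example $\sum_{n\le N}n^{-s}$-type blocks, whose $\h^p$ norm grows like a power of $\log N$ exceeding the $\h^2$ norm (Bondarenko--Heap--Seip type estimates). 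These blocks would be placed on well-separated frequency ranges with weights tuned to $\mu_\alpha$.

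The hard step is justifying the centered part. The paper indicates that for $q<4$ the sufficiency result fails or is unknown, so I cannot assume $\R_{X-\mu}f\in\h^{p,q}_\alpha$. I would therefore work in $\h^{2,q}_\alpha$ only, where $\E\|(\R_{X-\mu}f)_\sigma\|_{\h^2}^2$ is explicit. With $q\ge2$, Minkowski's inequality, and a fourth-moment-free argument, this shows $\R_{X-\mu}f\in\h^{2,q}_\alpha$ almost surely. I would then argue by contradiction. If $\R f\in \h^{p,q}_\alpha$ almost surely, then, applying this also to an independent copy and subtracting, a symmetrized version suffices. The symmetric randomization reduces via contraction to the Rademacher case \cite{CGW}, yielding $f\in\h^{2,q}_\alpha$ cleanly. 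The remaining obstacle is recovering $\mu f\in\h^{p,q}_\alpha$, which I would try to handle by conditioning/averaging (Jensen in $\Omega$ when $q\ge 2$), giving $\|\mu f\|\le \E\|\R f\|$ once integrability is available.
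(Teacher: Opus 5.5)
There is a genuine gap in the mean-zero part of (3). Knowing $\R_{X-\mu}f\in\h^{2,q}_{\alpha}$ a.s. (with $\mu=\E X_1$) gives nothing for $p>2$, because $\h^{p,q}_{\alpha}\subsetneq\h^{2,q}_{\alpha}$. To isolate $\mu f$ you need $\R_{X-\mu}f\in\h^{p,q}_{\alpha}$ a.s.

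You declare this unavailable for $q<4$ and fall back on Jensen applied to $\R f$ itself. That requires $\E\|\R f\|_{\h^{p,q}_{\alpha}}<\infty$, which almost sure membership does not give for a non-centred, non-symmetric randomization. Theorem~\ref{main6}(2) cannot be used either, since it presupposes $\E X_1=0$. Your symmetrization step is beside the point: it only yields information of the type $f\in(\h^{2,q}_{\alpha})_{\star}$, whereas $f\in\h^{2,q}_{\alpha}$ is already your hypothesis.

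The missing point is that the fourth moment enters only the inclusion $(\h^{p,q}_{\alpha})_{\star}\subset\h^{2,q}_{\alpha}$ (Theorem~\ref{mixed-suff2}). The inclusion $\h^{2,q}_{\alpha}\subset(\h^{p,q}_{\alpha})_{\star}$ holds for every $0<q<\infty$ once $X_1\in L^{\max\{2,q\}}$ and $\E X_1=0$ (Theorem~\ref{mixed-suff1}(2)). Its proof is your averaging idea, applied to Dirichlet polynomials $P$ and the centred variables $X_n'=X_n-\mu$, where integrability is automatic:
\begin{itemize}
\item for $q\ge1$, Jensen over an independent copy gives $\E\|\R_{X'}P_\sigma\|_{\h^p}^q\le\E\|\R_{X-\widetilde X}P_\sigma\|_{\h^p}^q$, as in \eqref{omega-omega'};
\item Khintchine--Kahane for the symmetric variables $X_n-\widetilde X_n$ lets you replace $\h^p$ by $\h^2$ (Lemma~\ref{p2});
\item Minkowski in $L^{q/2}(\Omega)$ bounds this by $\E|X_1-\widetilde X_1|^q\,\|P_\sigma\|_{\h^2}^q$ (Lemma~\ref{sigma-integral});
\item integrating against $\mu_\alpha$ gives a Cauchy criterion in $L^q(\Omega;\h^{p,q}_{\alpha})$.
\end{itemize}
Since $q\ge2$, $\max\{2,q\}=q$, and the first part supplies $X_1-\mu\in L^q$. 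This is exactly why (3) needs $q\ge2$; the restriction $q<4$ plays no role. Then for any $g\in\h^{2,q}_{\alpha}\setminus\h^{p,q}_{\alpha}$, whose existence is known from \cite{CGW} (no Bondarenko--Heap--Seip construction needed), $\R g=\R_{X'}g+\mu g\notin\h^{p,q}_{\alpha}$ almost surely. This contradicts the hypothesis.

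The moment part also does not close as you describe it, although your reduction is the right one. Test on $f(s)=\sum_na_n2^{-2^ns}$. By \cite[Lemma 4.2]{CGW}, which rests on the equivalence of all $H^p$ norms, $0<p<\infty$, on Hadamard-lacunary series, $f\in\h^{p,q}_{\alpha}$ if and only if $\{2^{-(\alpha+1)n/q}a_n\}\in\ell^q$, for every $p$. So the hypothesis says $\sum_n|b_nX_n|^q<\infty$ a.s. for every $b\in\ell^q$.

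Your substitutes for this step are too weak. First, the frequencies $2^k$ lift to $z_1^k$, which is not lacunary. Second, for $p<1$ the bound $|b_n|\lesssim n^{1/p-1}\|g\|_{H^p}$ at $n=2^k$ costs a factor $\gamma^k$ with $\gamma<1$. The resulting lower bound $\sum_k\gamma^k|b_kX_k|^q$ is then a.s. finite whenever $\E\log^+|X_1|<\infty$.

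More seriously, Borel--Cantelli on exceedances cannot detect $\E|X_1|^q=\infty$. Suppose $\Pp(|X_1|^q>x)=1/x$ for $x\ge1$. For every $c\in\ell^1$ and $\delta>0$ we get $\sum_k\Pp(|X_1|^q>\delta/c_k)\le\delta^{-1}\sum_kc_k<\infty$. So no admissible choice of coefficients produces infinitely many exceedances; your $2^k$ terms at level $2^k$ would violate $c\in\ell^1$. Yet $\sum_kc_k|X_k|^q=\infty$ a.s. for $c_k=1/(k\log^2k)$, because the truncated means are of order $c_k\log(1/c_k)\asymp1/(k\log k)$.

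The missing tool is the three-series-type Lemma~\ref{L1-integrable} \cite[Proposition 16]{CL}, applied to $|X_n|^q$ with $c_n=|b_n|^q$. This is how the paper concludes $X_1\in L^q$.
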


\begin{theorem}\label{main5}
Let $0<p<\infty$, $0<q<4$, $\alpha>-1$, and let $\{X_n\}_{n\geq1}$ be a sequence of non-constant i.i.d. random variables.
\begin{enumerate}
	\item If $0<p\leq2$ and $X_1\in L^4(\Omega)$, then $(\h^{p,q}_{\alpha})_{\star}=\h^{2,q}_{\alpha}$.
	\item If $2<p<\infty$, $X_1\in L^4(\Omega)$ and $\E X_1=0$, then $(\h^{p,q}_{\alpha})_{\star}=\h^{2,q}_{\alpha}$.
\end{enumerate}
\end{theorem}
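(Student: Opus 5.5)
We need to show two inclusions, $\h^{2,q}_{\alpha}\subset(\h^{p,q}_{\alpha})_{\star}$ and $(\h^{p,q}_{\alpha})_{\star}\subset\h^{2,q}_{\alpha}$, under the hypothesis $X_1\in L^4(\Omega)$ (together with $\E X_1=0$ when $p>2$). The plan is to reduce everything to the Hardy-space results (Theorems \ref{main1} and \ref{main2}) applied to the translates $f_\sigma$, and then integrate in $\sigma$ against $\mu_\alpha$.

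\emph{The inclusion $\h^{2,q}_{\alpha}\subset(\h^{p,q}_{\alpha})_{\star}$.} Since $\h^{p,q}_\alpha$ shrinks as $p$ grows, it suffices to treat $p>2$ and $p\le 2$ separately only where the mean-zero assumption is needed.
\begin{enumerate}
\item For each $\sigma>0$, apply Theorem \ref{main2} to the Dirichlet series $f_\sigma$ with exponent $t=4$ (here $\max\{2,q\}\le 4$). Tracking constants in its proof, I expect a bound of the form
$$\E\|\R f_\sigma\|_{\h^p}^q\le C\|f_\sigma\|_{\h^2}^q,$$
with $C$ depending only on $p,q$ and the law of $X_1$. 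This should follow because the proof of Theorem \ref{main2} is a Khintchine/Kahane-type moment comparison.
\item If such a uniform inequality is not directly available, derive it via symmetrization. Write $X=\E X_1+(X-X')$-type decompositions, use Kahane's contraction principle to reduce to Rademacher multipliers times $|X_n-X_n'|$, and apply the Rademacher case of \eqref{Hardy-Littlewood} conditionally.
\item For $q\le 2$, the fourth moment controls the needed bound only through $L^2$. For $2<q<4$, the bound $\E|\sum a_nX_n\ldots|^q$ is controlled by Rosenthal's inequality, using $\|X_1\|_4$.
\item Integrate in $\sigma$ and apply Fubini to get
$$\E\|\R f\|^q_{\h^{p,q}_\alpha}\le C\|f\|^q_{\h^{2,q}_\alpha}<\infty,$$
so $\R f\in\h^{p,q}_\alpha$ almost surely.
\item When $p>2$, mean zero is essential. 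Otherwise $\E X_1\cdot f$ would have to lie in $\h^{p,q}_\alpha$, which fails in general.
\end{enumerate}

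\emph{The inclusion $(\h^{p,q}_{\alpha})_{\star}\subset\h^{2,q}_{\alpha}$.} This is the main obstacle.
\begin{enumerate}
\item Assume $\R f\in\h^{p,q}_\alpha$ almost surely. Because the norm in $\h^{p,q}_\alpha$ is not a Hardy norm, one cannot directly invoke Theorem \ref{main1}. The first thing I would try is to show that $\E\|\R f\|^{q}_{\h^{p,q}_\alpha}<\infty$, or at least a small moment of it.
\item To get this, symmetrize: $\R_X f-\R_{X'}f=\R_{X-X'}f$ also lies in the space almost surely, with symmetric i.i.d.\ coefficients.
\item For symmetric coefficients, Kahane's inequality in the quasi-Banach space $\h^{p,q}_\alpha$ upgrades almost sure finiteness to finite moments. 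This uses an Itô--Nisio / Hoffmann-Jørgensen argument applied to the Dirichlet polynomial partial sums, which converge in $\h^{p,q}_\alpha$ by the density of $\mathcal{P}$.
\item Next obtain a lower bound. Use Paley--Zygmund on $\|\R f_\sigma\|_{\h^p}$, comparing it with $\|f_\sigma\|_{\h^2}$ via the second and fourth moments of $\sum a_n(X_n-X_n')n^{-\sigma}z^{\alpha(n)}$. Here $X_1\in L^4$ is precisely what makes the fourth moment finite. Hypercontractivity on $\T^\infty$ is not available for general coefficients, but for fixed $z$ the fourth moment is bounded by $C\|X_1\|_4^4(\sum|a_n|^2n^{-2\sigma})^2$.
\item Paley--Zygmund then gives
$$\Pp\bigl(\|\R f_\sigma\|_{\h^p}\ge c\|f_\sigma\|_{\h^2}\bigr)\ge\delta$$
uniformly in $\sigma$. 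For $p<2$ this requires an additional interpolation step: $\|\cdot\|_{\h^2}$ is bounded between the $\h^p$ and $\h^4$ norms, and $\E\|\R f_\sigma\|^4_{\h^4}$ is controlled by the fourth moment.
\item Finally, integrate this lower bound against $\mu_\alpha$. Combining it with the finite moment from step 3, via a reverse-Fatou / Paley--Zygmund argument in the mixed integral, yields $\int\|f_\sigma\|_{\h^2}^q\,d\mu_\alpha<\infty$, that is, $f\in\h^{2,q}_\alpha$.
\end{enumerate}
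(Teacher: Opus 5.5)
\textbf{First inclusion.} Your proof of $\h^{2,q}_{\alpha}\subset(\h^{p,q}_{\alpha})_{\star}$ is fine and is essentially the paper's Theorem \ref{mixed-suff1}. The uniform bound $\E\|\R f_\sigma\|_{\h^p}^q\lesssim\|f_\sigma\|_{\h^2}^q$ comes from Lemma \ref{sigma-integral}; Minkowski in $L^{q/2}$ suffices, so Rosenthal is not needed. When $p>2$ you add symmetrization and Lemma \ref{p2}. Apply the bound to the tails $\sum_{n=N}^{M}$, so that the partial sums are Cauchy in $L^q(\Omega;\h^{p,q}_{\alpha})$.

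\textbf{The gap: step 3 of the reverse inclusion.} Everything after step 3 depends on it, and it does not hold as justified. Kahane's inequality turns almost sure finiteness into finite moments for Rademacher, Gaussian or Steinhaus sums. It does not do so for sums with general symmetric multipliers $Z_n=X_n-X_n'$ that only have a fourth moment. For those, Hoffmann-J{\o}rgensen's inequality gives $\E\|\R_Z f\|^q_{\h^{p,q}_{\alpha}}<\infty$ if and only if $\E\sup_n c_n^q|Z_n|^q<\infty$, where $c_n=|a_n|\,\|n^{-s}\|_{\h^{p,q}_{\alpha}}$. Almost sure boundedness of $\sup_n c_n|Z_n|$ together with $Z_1\in L^4$ does not imply this for general $(c_n)$; atoms of $|Z_1|$ at super-exponentially sparse heights give counterexamples. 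The condition does hold for $f$ in the symbol space, but only a posteriori, once $f\in\h^{2,q}_{\alpha}$ is known, so invoking it here is circular. Two further problems in step 3: density of $\mathcal{P}$ does not make the partial sums of a given series converge, and when $\min\{p,q\}<1$ you are in a quasi-Banach space. Without a moment bound, steps 4--6 give nothing. Integrating $\Pp(\|\R f_\sigma\|_{\h^p}\ge c\|f_\sigma\|_{\h^2})\ge\delta$ in $\sigma$ only yields $\E\|\R f\|^q_{\h^{p,q}_{\alpha}}\gtrsim\|f\|^q_{\h^{2,q}_{\alpha}}$. The events for different $\sigma$ are strongly dependent, so a per-$\sigma$ lower bound cannot show that $\|\R f\|_{\h^{p,q}_{\alpha}}=\infty$ with positive probability.

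\textbf{What is missing.} You need a mechanism that turns almost sure finiteness into the deterministic condition.

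\emph{The paper's route.} It first reduces, via Lemma \ref{mixed-symmetric} (Fubinization plus the Rademacher case), to $\R_Z f\in\h^{2,q}_{\alpha}$ a.s. Theorem \ref{log-dyadic-characterization} then rewrites this as $\sum_k 2^{-k(\alpha+1)}Y_k^{q/2}<\infty$ a.s., where $Y_k=\sum_{n\in I_k}|a_nZ_n|^2$ are \emph{independent}. The fourth moment gives $\Pp(Y_k\ge\frac12\E Y_k)\ge\delta$ uniformly in $k$. An independence/Borel--Cantelli argument then forces $\sum_k 2^{-k(\alpha+1)}(\E Y_k)^{q/2}<\infty$.

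\emph{Repairing your own idea.} Your ``Paley--Zygmund in the mixed integral'' also works after the same reduction to $\h^{2,q}_{\alpha}$. Apply it to the whole truncated integral $W_N=\int_0^{\infty}(\sum_{n\le N}|a_n|^2|Z_n|^2n^{-2\sigma})^{q/2}\,d\mu_{\alpha}(\sigma)$, not pointwise in $\sigma$. Write $Y=\sum_{n\le N}|a_n|^2|Z_n|^2n^{-2\sigma}$. Then $\E Y^2\le \E|Z_1|^4(\E|Z_1|^2)^{-2}(\E Y)^2$. Combining this with Minkowski's inequality in $L^{4/q}(\Omega)$ gives $\E W_N^{4/q}\lesssim(\E W_N)^{4/q}$; this is exactly where $q<4$ enters. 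Hence $\Pp(W_N\ge\frac12\E W_N)\ge\delta$ uniformly in $N$. Since the full integral is finite a.s., this forces $\sup_N\E W_N<\infty$, that is, $f\in\h^{2,q}_{\alpha}$.
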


We also have the following integrability result in the setting of mixed norm spaces.

\begin{theorem}\label{main6}
Let $0<p,q<\infty$, $\alpha>-1$, and let $\{X_n\}_{n\geq1}$ be a sequence of i.i.d. random variables. Suppose that $f\in(\h^{p,q}_{\alpha})_{\star}$ and $X_1\in L^t(\Omega)$ with $t=\max\{4,q\}$.
\begin{enumerate}
	\item If $0<p\leq 2$, then $\E\|\R f\|_{\h^{p,q}_{\alpha}}^q<\infty$.
	\item If $2<p<\infty$ and $\E X_1=0$, then $\E\|\R f\|_{\h^{p,q}_{\alpha}}^q<\infty$.
\end{enumerate}
\end{theorem}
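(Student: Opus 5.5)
The plan is to first show that $f\in\h^{2,q}_{\alpha}$, and then bound $\E\|\R f\|_{\h^{p,q}_{\alpha}}^q$ by integrating in $\sigma$ a pointwise-in-$\sigma$ moment estimate, so that the result follows from Fubini. Write $f=\sum a_nn^{-s}$, so that
\[
\|\R f\|_{\h^{p,q}_{\alpha}}^q=\int_0^\infty\|(\R f)_\sigma\|_{\h^p}^q\,d\mu_\alpha(\sigma),\qquad (\R f)_\sigma=\R(f_\sigma).
\]
It then suffices to show that for each $\sigma>0$,
\begin{equation*}
\E\|\R(f_\sigma)\|_{\h^p}^q\lesssim \|f_\sigma\|_{\h^2}^q ,
\end{equation*}
with a constant independent of $\sigma$, and that $f\in\h^{2,q}_\alpha$. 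Integrating against $d\mu_\alpha$ then gives $\E\|\R f\|^q_{\h^{p,q}_\alpha}\lesssim\|f\|^q_{\h^{2,q}_\alpha}<\infty$. Here $\h^{2,q}_\alpha$ is normed by $\int\|f_\sigma\|_{\h^2}^q\,d\mu_\alpha$, and $\|f_\sigma\|^2_{\h^2}=\sum|a_n|^2n^{-2\sigma}$.

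\textbf{Step 1: the pointwise estimate.} For each fixed $\sigma>0$ the series $f_\sigma$ lies in $\h^2$, provided $f$ does on a half-plane. Given this, Theorem \ref{main2} and the argument in its proof give the estimate, since $X_1\in L^{t}$ with $t\ge\max\{2,q\}$.
\begin{itemize}
\item For $p\le2$ we use $\|\cdot\|_{\h^p}\le\|\cdot\|_{\h^2}$ and $\E\|\R g\|_{\h^2}^q=\E(\sum|a_n|^2|X_n|^2n^{-2\sigma})^{q/2}$. This is bounded by $\|X_1\|_{L^{\max(2,q)}}^q\|g\|_{\h^2}^q$: for $q\ge2$ use Minkowski in $L^{q/2}$, and for $q<2$ use Jensen.
\item For $p>2$ with $\E X_1=0$, we use symmetrization (comparison with $X_n\varepsilon_n$ for Rademacher $\varepsilon_n$, valid for mean-zero i.i.d. variables) together with Khintchine–Kahane on $H^p(\T^\infty)$. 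This gives $\E\|\R g\|^q_{\h^p}\lesssim\E\|(\sum|a_n|^2|X_n|^2|z^{\alpha(n)}|^2)^{1/2}\|^q_{L^p}=\E(\sum|a_n|^2|X_n|^2)^{q/2}$, which is then handled as in the previous case.
\end{itemize}
The key point is that the constant depends only on $p$, $q$ and the law of $X_1$, so it is uniform in $\sigma$.

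\textbf{Step 2: showing $f\in\h^{2,q}_\alpha$.} This is the main obstacle. It is where the hypothesis $t\ge4$ enters, and it is essentially the sufficiency half of Theorems \ref{main3} and \ref{main5} applied in reverse, i.e.\ the inclusion $(\h^{p,q}_\alpha)_\star\subset\h^{2,q}_\alpha$.

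I would argue as follows. Since $X_1$ is non-constant, replace $\R f$ by the symmetrized difference $\R_{X-X'}f$ with $X'$ an independent copy; this still lies in $\h^{p,q}_\alpha$ almost surely. By contraction, $\R_{X-X'}f$ has the same law as $\R_{(X-X')\varepsilon}f$. For fixed $X,X'$, the Rademacher series lies in the $\h^{p,q}_\alpha$-type space almost surely, so Kahane's inequality and the Rademacher case \eqref{mixednorm-Littlewood} give
\[
\int\Big(\sum|a_n|^2|X_n-X_n'|^2n^{-2\sigma}\Big)^{q/2}d\mu_\alpha<\infty\quad\text{a.s.}
\]
To pass from this to $\sum|a_n|^2n^{-2\sigma}$, I would use a Paley–Zygmund lower bound on the events $\{|X_n-X_n'|\ge c\}$. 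Working on dyadic blocks in $\log n$, the fourth moment ($X_1\in L^4$) controls the variance of $\sum_{n\in I}|a_n|^2|X_n-X_n'|^2$. This should give, with positive probability, a lower bound by $c\sum_{n\in I}|a_n|^2$ simultaneously in a way that is integrable in $\sigma$. A zero-one law then upgrades this to $f\in\h^{2,q}_\alpha$.

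Once Step 2 is done, combining it with Step 1 via Fubini finishes both parts of Theorem \ref{main6}.
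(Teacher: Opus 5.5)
Your proposal is correct and follows the paper's route. Step 2 is Theorem \ref{mixed-suff2}: symmetrize, reduce via the Rademacher case to $(\h^{2,q}_{\alpha})_{\star}\subset\h^{2,q}_{\alpha}$, then run a fourth-moment Paley--Zygmund argument on the blocks $I_k$, which the paper does after converting the $\sigma$-integral into the weighted block sum of Theorem \ref{log-dyadic-characterization}. Step 1 is Theorem \ref{mixed-suff1}, so the result follows by Tonelli exactly as in the paper.

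There are a few minor points to fix. The block lower bounds do not hold simultaneously; they hold independently across blocks, each with probability bounded below, so the conclusion should come from a Borel--Cantelli/three-series argument. For $p>2$ and $0<q<1$, pass through $q=1$ and Jensen, since the symmetrization inequality needs convexity. Finally, the trivial case of constant $X_1$, which the statement does not exclude, must be set aside first, because then $f$ need not lie in $\h^{2,q}_{\alpha}$.
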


Theorems \ref{main3}--\ref{main6} are derived from Theorems \ref{mixed-suff1}, \ref{mixed-nece} and \ref{mixed-suff2} in Section \ref{mixed-case}.

Throughout the paper, the notation $A\lesssim B$ (or $B\gtrsim A$) means that there exists a nonessential constant $C>0$ such that $A\leq CB$. If $A\lesssim B\lesssim A$, then we write $A\asymp B$. We use $\mathcal{D}$ to denote the space of Dirichlet series converging at some point in $\C$, and use $\E$ to denote the expectation on the underlying probability space $(\Omega,\mathcal{F},\Pp)$.

\section{The Hardy space case}\label{Hardy-case}

In this section, we consider randomization in the context of Hardy spaces of Dirichlet series.

We begin with the following lemma, which relates random Dirichlet series to random power series in the Hardy space setting. Recall that for a power series $g(\xi)=\sum_{n=0}^{\infty}a_n\xi^n$ converging in the unit disk $\mathbb{D}$ and a sequence $\{Y_n\}_{n\geq0}$ of random variables, the random analytic function $\R_Yg$ associated with $g$ and $\{Y_n\}_{n\geq1}$ is defined by
$$\R_{Y}g(s):=\sum_{n=0}^{\infty}a_nY_n\xi^n.$$
Similarly to what was done for Dirichlet series, we define the symbol space $\big(H^p(\mathbb{D})\big)^{Y}_{\star}$ as
$$\big(H^p(\mathbb{D})\big)^{Y}_{\star}:=\left\{g(\xi)=\sum_{n=0}^{\infty}a_n\xi^n:\Pp\big(\R_Yg\in H^p(\mathbb{D})\big)=1\right\}$$
for the classical Hardy space $H^p(\mathbb{D})$ over the unit disk $\mathbb{D}$. The following lemma is an immediate consequence of the Bohr lift.

\begin{lemma}\label{necessity}
Let $0<p<\infty$ and let $\{X_n\}_{n\geq1}$ be a sequence of random variables. Denote $Y_k=X_{2^{k}}$ for $k\geq0$. If $(\h^p)_{\star}=\h^2$, then $(H^p(\D))^Y_{\star}=H^2(\D)$.
\end{lemma}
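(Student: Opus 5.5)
The plan is to use the embedding $g\mapsto f_g$ that sends a power series to a Dirichlet series supported on the powers of $2$. Given $g(\xi)=\sum_{k\ge0}a_k\xi^k$, set
$$f_g(s):=\sum_{k=0}^{\infty}a_k(2^k)^{-s}.$$
Since $\alpha(2^k)=(k,0,0,\ldots)$, the Bohr lift gives $\mathcal{B}f_g(z)=g(z_1)$, a function of the first variable only. Because $m_\infty$ is a product measure, for every Dirichlet polynomial of this form we get $\|f_g\|_{\h^p}=\|g\|_{H^p(\T)}$. Passing to completions, $f_g\in\h^p$ if and only if $g\in H^p(\D)$. The same computation, with coefficients $a_kY_k=a_kX_{2^k}$, gives
$$\R_Xf_g=f_{\R_Yg}.$$
Hence $\Pp(\R_Y g\in H^p(\D))=\Pp(\R_Xf_g\in\h^p)$ pointwise on $\Omega$. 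In particular $g\in(H^p(\D))^Y_\star$ if and only if $f_g\in(\h^p)_\star$. Taking $p=2$ in the isometry also gives $g\in H^2(\D)$ if and only if $f_g\in\h^2$. Combining these with the hypothesis $(\h^p)_\star=\h^2$ yields
$$g\in(H^p(\D))^Y_\star\iff f_g\in(\h^p)_\star=\h^2\iff g\in H^2(\D),$$
which is the claim.

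The only point needing care is the passage from polynomials to the completion. We must check that a Dirichlet series $f=\sum a_n n^{-s}$ lies in $\h^p$ exactly when its Bohr lift lies in $H^p(\T^\infty)$, i.e.\ when $\mathcal{B}f$ is the Fourier series of an $H^p(\T^\infty)$ function. This is the identification stated in the paper via the isometric isomorphism $\mathcal{B}:\h^p\to H^p(\T^\infty)$.

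For $\mathcal{B}f_g=g(z_1)$ the argument runs as follows. If $g\in H^p(\D)$, its Taylor polynomials (for $p\ge1$), or the dilations $g_r$ for general $p$, approximate $g$ in $H^p$. Their lifts then form a Cauchy sequence of Dirichlet polynomials in $\h^p$. The limit has the same coefficients, since coefficient functionals are continuous on $\h^p$; this is standard, as elements of $\h^p$ converge on $\C_{1/2}$.

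Conversely, if $f_g\in\h^p$, then $\mathcal{B}f_g$ is an $H^p(\T^\infty)$ function depending only on $z_1$, with Fourier coefficients $a_k$. Hence $g$ has boundary function in $H^p(\T)$, so $g\in H^p(\D)$. A cleaner route for this direction is to take the conditional expectation onto the first variable, or to restrict to $z'=(z_2,z_3,\ldots)$ fixed. This is contractive on $H^p$ for the polynomial approximants, so it passes to the limit.

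I expect the $p<1$ case of this completion step to be the only mild obstacle, since $H^p$ is then only quasi-Banach. The dilation argument together with the contractivity of slicing handles it, and the rest is bookkeeping.
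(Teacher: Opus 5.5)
Your argument is correct and is exactly the ``immediate consequence of the Bohr lift'' that the paper invokes without writing out. You embed $g$ as $f_g(s)=\sum_k a_k2^{-ks}$, note that $\mathcal{B}f_g(z)=g(z_1)$ and $\R_Xf_g=f_{\R_Yg}$, and transfer membership in $\h^p$ and $\h^2$ through the resulting isometry. Your treatment of the completion step is a welcome addition the paper omits. Two small points: for $p<1$ the slicing that works is evaluation at $z'=0$ (which is what conditional expectation does on analytic polynomials, and which is contractive by subharmonicity), not evaluation at a fixed $z'\in\T^\infty$. Also, $f_g\in\dd$ holds because $g$ converges in $\D$.
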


We also need the following lemma, which follows from \cite[Lemma 2.2]{CCFL}.

\begin{lemma}\label{h2-characterization}
Let $\{X_n\}_{n\geq1}$ be a sequence of i.i.d. random variables. Then $(\h^2)_{\star}=\h^2$ if and only if $X_1\in L^2(\Omega)$.
\end{lemma}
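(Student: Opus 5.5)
The statement to prove is Lemma \ref{h2-characterization}. The plan is to reduce $(\h^2)_{\star}=\h^2$ to a statement about weighted sums of i.i.d.\ variables, and then invoke \cite[Lemma 2.2]{CCFL}. Since $\|\R f\|_{\h^2}^2=\sum_n|a_n|^2|X_n|^2$, membership $\R f\in\h^2$ is equivalent to a.s.\ convergence of this series of independent nonnegative terms. One caveat: we also need $\R f\in\dd$. If $\sum|a_nX_n|^2<\infty$, the Cauchy--Schwarz inequality gives absolute convergence of $\R f$ on $\C_{1/2}$, so this is automatic. Thus
$$(\h^2)_{\star}=\Big\{f\in\dd:\ \textstyle\sum_n |a_n|^2|X_n|^2<\infty \text{ a.s.}\Big\}.$$
The question is therefore whether this coincides with $\{f:\sum|a_n|^2<\infty\}$. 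This depends only on the weights $b_n=|a_n|^2\ge0$ and the i.i.d.\ nonnegative variables $Z_n=|X_n|^2$, and is exactly the setting of \cite[Lemma 2.2]{CCFL}, which concerns $H^2(\D)$ with $\|\R g\|^2=\sum|b_n|^2|Y_n|^2$. I will also sketch the argument directly.

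\emph{Sufficiency.} Suppose $X_1\in L^2$. If $f\in\h^2$, then $\E\sum|a_n|^2|X_n|^2=\E|X_1|^2\sum|a_n|^2<\infty$, so the series is finite a.s. Conversely, suppose $f\in(\h^2)_{\star}$ but $\sum|a_n|^2=\infty$. Since $X_1$ is non-constant in the intended application, or at least $\Pp(X_1\neq0)>0$ (if $X_1=0$ a.s.\ the statement must be read with this exclusion, as in the paper), there are $\delta>0$ and $c=\Pp(|X_1|^2\ge\delta)>0$. Then $\sum|a_n|^2|X_n|^2\ge\delta\sum|a_n|^2\mathbf 1_{\{|X_n|^2\ge\delta\}}$. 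I would show this lower bound diverges a.s.\ by Kolmogorov's three-series theorem applied to the truncated bounded terms $\min(|a_n|^2,1)\mathbf 1_{\{|X_n|^2\ge\delta\}}$. Their expectations sum to $c\sum\min(|a_n|^2,1)=\infty$, and their variances are controlled by their means, so the sum diverges a.s. This gives $(\h^2)_{\star}\subset\h^2$.

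\emph{Necessity.} Suppose $\E|X_1|^2=\infty$. The goal is to construct $f\in\h^2$ with $\sum|a_n|^2|X_n|^2=\infty$ a.s. Set $Z=|X_1|^2$. For weights $b_n$ with $\sum b_n<\infty$, the three-series theorem says the sum $\sum b_nZ_n$ converges a.s.\ if and only if
$$\sum_n \Pp(b_nZ>1)<\infty \quad\text{and}\quad \sum_n b_n\E\big[Z\mathbf 1_{\{Z\le 1/b_n\}}\big]<\infty .$$
Choose blocks where $b_n=2^{-k}$ repeated $N_k$ times, with $\sum_k N_k2^{-k}<\infty$. The first series is then $\sum_k N_k\Pp(Z>2^k)$. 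Since $\E Z=\infty$, we have $\sum_k 2^k\Pp(Z>2^k)=\infty$. Take $N_k\approx 2^k\varepsilon_k$ with $\sum\varepsilon_k<\infty$ but $\sum\varepsilon_k 2^k\Pp(Z>2^k)\cdot 2^{k}/2^k=\infty$. This is possible because any divergent nonnegative series $\sum u_k$ admits $\varepsilon_k\to0$ summable with $\sum\varepsilon_k u_k=\infty$ only when the $u_k$ are unbounded. Otherwise we use the second series: if $u_k=2^k\Pp(Z>2^k)$ is bounded, the truncated means $\E[Z\mathbf1_{Z\le2^k}]\to\infty$, and we pick $\varepsilon_k$ summable with $\sum_k\varepsilon_k\E[Z\mathbf 1_{Z\le 2^k}]=\infty$. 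In either case one of the three series diverges, so the sum diverges a.s.\ by the zero-one law. We then set $a_n=b_n^{1/2}$.

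The main obstacle is this diagonal choice of weights in the necessity direction. That is precisely the content of \cite[Lemma 2.2]{CCFL}, so in the write-up I will simply cite it after the reduction to $\sum|a_n|^2|X_n|^2$.
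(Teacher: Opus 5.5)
Your proposal is correct and follows the same route as the paper, which also obtains the lemma directly from \cite[Lemma 2.2]{CCFL} after noting that $\R f\in\h^2$ means exactly $\sum_n|a_n|^2|X_n|^2<\infty$; your three-series sketch of both directions is sound as well. Your caveat is well taken: if $X_1=0$ a.s.\ then $(\h^2)_{\star}=\dd\neq\h^2$, so the lemma tacitly requires $\Pp(X_1\neq0)>0$, and this holds wherever the paper uses it, since there it is applied to a non-constant $X_1$ or to $X_1-\widetilde{X}_1$.
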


Given a Dirichlet series $f(s)=\sum_{n=1}^{\infty}a_nn^{-s}$, we write
$$\R_{\epsilon}f(s):=\sum_{n=1}^{\infty}a_n\epsilon_nn^{-s}$$
for convenience, where $\{\epsilon_n\}_{n\geq1}$ is a sequence of i.i.d. Rademacher random variables on a probability space $(\Omega',\mathcal{F}',\Pp')$. Consequently,
$$\R_{\epsilon}\R f(s):=\sum_{n=1}^{\infty}a_n\epsilon_nX_nn^{-s}.$$
The following lemma considers the randomization with symmetry assumptions, i.e. $X_1$ and $-X_1$ have the same distribution.

\begin{lemma}\label{symmetric}
Let $\{X_n\}_{n\geq1}$ be a sequence of i.i.d. random variables. If $X_1$ is symmetric, then for $0<p<\infty$, $(\h^p)_{\star}=(\h^2)_{\star}$.
\end{lemma}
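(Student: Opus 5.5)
The plan is to use the symmetry of $X_1$ to insert an independent Rademacher sequence, and then apply the Khintchine--Kahane estimates on $\T^{\infty}$. Because $X_1$ is symmetric and the $X_n$ are independent, the sequences $\{X_n\}$ and $\{\epsilon_nX_n\}$ (on the product space $\Omega\times\Omega'$) have the same joint distribution. Hence, for every $f\in\dd$,
$$\Pp(\R f\in\h^p)=(\Pp\otimes\Pp')(\R_{\epsilon}\R f\in\h^p).$$
By Fubini, the left side equals $1$ if and only if, for $\Pp$-almost every $\omega$, the Rademacher randomization of the fixed series $\R f(\omega)=\sum a_nX_n(\omega)n^{-s}$ lies in $\h^p$ $\Pp'$-almost surely. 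In the notation of \eqref{Hardy-Littlewood} this says $\R f(\omega)\in(\h^p)^{\epsilon}_{\star}$, the Rademacher symbol space.

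Now \cite[Theorem 3.2]{CGW} gives $(\h^p)^{\epsilon}_{\star}=\h^2$ for every $0<p<\infty$. Therefore $f\in(\h^p)_{\star}$ if and only if $\R f(\omega)\in\h^2$ for $\Pp$-a.e. $\omega$, that is, if and only if $f\in(\h^2)_{\star}$. This already gives the lemma. To be safe about measurability when applying Fubini, I would note that the event $\{\R_\epsilon\R f\in\h^p\}$ is jointly measurable. It can be written as $\{\sup_N\|\sum_{n\le N}a_n\epsilon_nX_nn^{-s}\|_{\h^p}<\infty\}$ for $p\geq1$, using Bohr lift and martingale convergence on $H^p(\T^\infty)$. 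For general $p$ one can use the characterization by the supremum over $N$ of norms of partial sums restricted to the first $N$ primes.

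The main obstacle is justifying that the set $\{\R f\in\h^p\}$ is a genuine measurable event described by the coefficient sequence, so that equality in distribution transfers the probabilities. First I would try to express membership in $\h^p$ via the finite-variable truncations $f_N$ (the terms with $n$ supported on the first $N$ primes): $f\in\h^p$ iff $\sup_N\|f_N\|_{\h^p}<\infty$, and each $\|f_N\|_{\h^p}$ equals $\lim_{r\to1}$ of integrals of polynomial dilations. This is a countable-operations function of the coefficients. Granting this, the argument above is complete. Alternatively, if one only wants to cite earlier results, the proof reduces to the one-line observation $\R f\overset{d}{=}\R_\epsilon\R f$ combined with $(\h^p)^{\epsilon}_\star=\h^2$ applied conditionally on $\omega$.
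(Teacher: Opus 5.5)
Your proof is correct and is essentially the paper's argument: symmetrize with an independent Rademacher sequence, pass to the fixed series $\R f(\omega)$ by Fubini (the paper cites the Fubinization principle of Li--Queff\'elec for this step), and apply the Rademacher case of \cite[Theorem 3.2]{CGW}. One small correction to your measurability aside, which the paper does not discuss: rely only on the truncations to the first $N$ primes, because the ordinary partial sums $\sum_{n\le N}$ are not martingale projections and are not uniformly bounded on $\h^1$, so $\sup_N\bigl\|\sum_{n\le N}b_nn^{-s}\bigr\|_{\h^1}<\infty$ does not characterize membership in $\h^1$.
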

\begin{proof}
Assume that $X_1$ is symmetric, and let $\{\epsilon_n\}_{n\geq1}$ be a Rademacher sequence on a probability space $(\Omega',\mathcal{F}',\Pp')$ independent of $\{X_n\}_{n\geq1}$. Then for almost every $\omega'\in\Omega'$, $\epsilon_n(\omega')X_n$ and $X_n$ are identically distributed. Consequently, for any $f\in\mathcal{D}$, $\R f\in\h^p$ a.s. if and only if for almost every $\omega'\in\Omega'$,
$$\Pp(\R_{\epsilon}\R f\in\h^p)=1,$$
which, by the Fubinization principle (see \cite[Proposition III.7, p. 26]{LQ18}), is equivalent to that for almost every $\omega\in\Omega$,
$$\Pp'(\R_{\epsilon}\R f\in\h^p)=1.$$
Using \cite[Theorem 3.2]{CGW}, we obtain that the above assertion is equivalent to $\R f\in \h^2$ a.s.
\end{proof}

We are now ready to prove Theorem \ref{main1}.

\begin{proof}[Proof of Theorem \ref{main1}]
The necessity follows from Lemma \ref{necessity} and \cite[Theorem 1.1]{CCFL}. We consider the sufficiency.

(1) Suppose $0<p\leq2$ and $X_1\in L^2(\Omega)$. Then by Lemma \ref{h2-characterization},
$$\h^2=(\h^2)_{\star}\subset(\h^p)_{\star}.$$
Suppose now that $f(s)=\sum_{n=1}^{\infty}a_nn^{-s}$ belongs to $(\h^p)_{\star}$, i.e.
$$\sum_{n=1}^{\infty}a_nX_nn^{-s}\in \h^p\quad \text{a.s.}$$
Let $\{\widetilde{X}_n\}_{n\geq1}$ be an independent copy of $\{X_n\}_{n\geq1}$. Then $X_n-\widetilde{X}_n$ is symmetric, and
$$\sum_{n=1}^{\infty}a_n(X_n-\widetilde{X}_n)n^{-s}\in\h^p\quad\text{a.s.}$$
Therefore, by Lemmas \ref{symmetric} and \ref{h2-characterization}, $f\in\h^2$, and so $(\h^p)_{\star}\subset\h^2$.

(2) Suppose $2<p<\infty$, $X_1\in L^2(\Omega)$ and $\E X_1=0$. Then by Lemma \ref{h2-characterization},
$$(\h^p)_{\star}\subset(\h^2)_{\star}=\h^2.$$
Suppose now that $f(s)=\sum_{n=1}^{\infty}a_nn^{-s}\in \h^2$. It is easy to see that for any $1\leq N\leq M<\infty$,
$$\E\left\|\sum_{n=N}^Ma_nX_nn^{-s}\right\|_{\h^p}^2
\leq\E\left(\sum_{n=N}^M|a_nX_n|\right)^2
\leq(M-N+1)\E|X_1|^2\sum_{n=1}^{\infty}|a_n|^2<\infty.$$
Since $\E X_1=0$, we can use \cite[Proposition II.13, p. 128]{LQ18} to obtain that
$$\E\left\|\sum_{n=N}^Ma_nX_nn^{-s}\right\|_{\h^p}^2
\leq\E\left\|\sum_{n=N}^Ma_n(X_n-\widetilde{X}_n)n^{-s}\right\|_{\h^p}^2,$$
where $\{\widetilde{X}_n\}_{n\geq1}$ is an independent copy of $\{X_n\}_{n\geq1}$. Let $\{\epsilon_n\}_{n\geq1}$ be a Rademacher sequence on a probability space $(\Omega',\mathcal{F}',\Pp')$ independent of $\{X_n-\widetilde{X}_n\}_{n\geq1}$. Then
$$\E\left\|\sum_{n=N}^Ma_n(X_n-\widetilde{X}_n)n^{-s}\right\|_{\h^p}^2
=\E\E_{\epsilon}\left\|\sum_{n=N}^Ma_n\epsilon_n(X_n-\widetilde{X}_n)n^{-s}\right\|_{\h^p}^2.$$
Since $2<p<\infty$, we know that $\h^p$ is of type $2$ (see \cite[Theorem IV.9, p. 188]{LQ18}). Consequently,
\begin{align*}
\E\left\|\sum_{n=N}^Ma_nX_nn^{-s}\right\|_{\h^p}^2
&\leq\E\E_{\epsilon}\left\|\sum_{n=N}^Ma_n\epsilon_n(X_n-\widetilde{X}_n)n^{-s}\right\|_{\h^p}^2\\
&\lesssim\E\sum_{n=N}^M\left\|a_n(X_n-\widetilde{X}_n)n^{-s}\right\|_{\h^p}^2\\
&=\sum_{n=N}^M|a_n|^2\E|X_n-\widetilde{X}_n|^2\\
&\leq4\E|X_1|^2\sum_{n=N}^{M}|a_n|^2\to0
\end{align*}
as $N,M\to\infty$. Therefore, the series $\sum_{n=1}^{\infty}a_nX_nn^{-s}$ converges in $L^2(\Omega;\h^p)$, which implies that $\R f\in\h^p$ a.s. That is, $f\in(\h^p)_{\star}$ and $\h^2\subset(\h^p)_{\star}$. The proof is complete.
\end{proof}

To prove Theorem \ref{main2}, we need the following elementary estimate.

\begin{lemma}\label{p2}
Let $0<p,q<\infty$ and let $\{X_n\}_{n\geq1}$ be a sequence of i.i.d. random variables. If $X_1\in L^q(\Omega)$ and is symmetric, then for any Dirichlet polynomial $f$,
$$\E\|\R f\|_{\h^p}^q\asymp\E\|\R f\|_{\h^2}^q$$
with implicit constant depending only on $p$ and $q$.
\end{lemma}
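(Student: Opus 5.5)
Since $X_1$ is symmetric, we can insert an independent Rademacher sequence $\{\epsilon_n\}_{n\ge1}$ on $(\Omega',\mathcal{F}',\Pp')$ without changing the distribution. Precisely, for a Dirichlet polynomial $f(s)=\sum_{n=1}^N a_nn^{-s}$, the vectors $(X_n)_{n\le N}$ and $(\epsilon_nX_n)_{n\le N}$ are equidistributed. Hence, for every $0<r<\infty$,
$$\E\|\R f\|_{\h^r}^q=\E\,\E_{\epsilon}\|\R_\epsilon\R f\|_{\h^r}^q .$$
So the plan is to fix $\omega\in\Omega$ and compare $\E_\epsilon\|\R_\epsilon g\|_{\h^p}^q$ with $\E_\epsilon\|\R_\epsilon g\|_{\h^2}^q$ for the deterministic Dirichlet polynomial $g=\R f(\omega)=\sum_{n\le N}a_nX_n(\omega)n^{-s}$. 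We then integrate in $\omega$. Finiteness of all quantities is automatic: $\|\R f\|_{\h^2}^q=(\sum_{n\le N}|a_nX_n|^2)^{q/2}\le (\sum_{n\le N}|a_n||X_n|)^q$, and this has finite expectation because $X_1\in L^q$. The same bound works for $\h^p$ via $\|\cdot\|_{\h^p}\le\|\cdot\|_{H^\infty}$.

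For the deterministic comparison, the key input is a Khintchine–Kahane-type equivalence in $\h^p$. By Kahane's inequality (valid for quasi-Banach spaces as well), all moments of $\|\R_\epsilon g\|_{\h^p}$ are equivalent. Thus it suffices to prove
$$\E_\epsilon\|\R_\epsilon g\|_{\h^p}^p\asymp \|g\|_{\h^2}^p ,$$
with constants depending only on $p$. By Fubini and the Bohr lift,
$$\E_\epsilon\|\R_\epsilon g\|_{\h^p}^p=\int_{\T^\infty}\E_\epsilon\Big|\sum_n a_nX_n(\omega)\epsilon_n z^{\alpha(n)}\Big|^p\,dm_\infty(z).$$
For each fixed $z$, Khintchine's inequality gives
$$\E_\epsilon\Big|\sum_n c_n\epsilon_n\Big|^p\asymp\Big(\sum_n|c_n|^2\Big)^{p/2}, \qquad c_n=a_nX_n(\omega)z^{\alpha(n)}.$$
Since $|z^{\alpha(n)}|=1$, the right-hand side equals $\|g\|_{\h^2}^p$, independently of $z$. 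This is exactly the argument behind \cite[Theorem 3.2]{CGW}, and we could cite it directly.

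Combining, for each $\omega$,
$$\E_\epsilon\|\R_\epsilon g\|_{\h^p}^q\asymp \|g\|_{\h^2}^q=\E_\epsilon\|\R_\epsilon g\|_{\h^2}^q .$$
The last equality holds because the $\h^2$ norm is invariant under sign changes of the coefficients. Taking $\E$ in $\omega$ and using the equidistribution identity twice gives $\E\|\R f\|_{\h^p}^q\asymp\E\|\R f\|_{\h^2}^q$, with constants depending only on $p,q$.

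The only delicate step is applying Kahane's inequality for $0<p<1$ or $0<q<1$, where $\h^p$ is only a $p$-normed quasi-Banach space. If needed, we would avoid Kahane entirely. In the upper bound, Minkowski's integral inequality (when $q\ge p$) or Jensen's inequality (when $q\le p$) pushes $\E_\epsilon$ inside the $z$-integral, after which Khintchine applies pointwise. In the lower bound, we would use the contractive embedding $\h^p\subset\h^{\min(p,1)}$ together with Khintchine's lower bound in $L^{q}$ pointwise and Minkowski's inequality in the reverse direction. Either route yields constants independent of $N$, $\{a_n\}$ and $\omega$, which is all that is needed.
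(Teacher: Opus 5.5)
Your proof is correct, and its skeleton is the paper's: Rademacher symmetrization, then, for fixed $\omega$, Fubini on $\T^\infty$ and Khintchine pointwise in $z$ using $|z^{\alpha(n)}|=1$.

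The difference lies in how you reconcile the moment exponent $q$ with the norm exponent $p$. You invoke Kahane's inequality in the (quasi-)Banach space $\h^p$ to trade the $q$-th moment for the $p$-th. After that, Fubini and Khintchine give $\E_\epsilon\|\R_\epsilon g\|_{\h^p}^p\asymp\|g\|_{\h^2}^p$ with no case distinction.

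The paper never uses Kahane. For $p\ge q$, the upper bound comes from Jensen (concavity of $t^{q/p}$), which moves $\E_\epsilon$ inside the $z$-integral. The lower bound comes from the contractive embedding $\h^p\subset\h^q$ plus Khintchine in $L^q$. The case $p<q$ is left as ``similar''; there Minkowski's integral inequality gives the upper bound and Jensen the lower.

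So your route is uniform in $(p,q)$ and shorter. However, for $p<1$ it rests on the quasi-Banach version of Kahane's inequality. That version is true, for instance by running the L\'evy-inequality proof with the subadditive quantity $\|\cdot\|_{\h^p}^{p}$, but it is less elementary. The paper's route needs only Khintchine, at the cost of splitting cases.

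Your fallback is essentially the paper's argument, with one slip in the lower bound. Write $r=\min(p,1)$. Reverse Minkowski combined with Khintchine in $L^q$ bounds $\E_\epsilon\|\R_\epsilon g\|_{\h^r}^q$ from below only when $q\le r$. For $q>r$ you need Jensen instead, namely $\E_\epsilon\|\cdot\|_{\h^r}^q\ge\bigl(\E_\epsilon\|\cdot\|_{\h^r}^r\bigr)^{q/r}$, together with Khintchine in $L^r$. Alternatively, follow the paper: use the embedding $\h^p\subset\h^q$ when $q\le p$, and Jensen when $q>p$.
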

\begin{proof}
Suppose $f(s)=\sum_{n=1}^Na_nn^{-s}$. Since $X_1\in L^q(\Omega)$, it is easy to verify that $\E\|\R f\|_{\h^p}^q$ and $\E\|\R f\|_{\h^2}^q$ are both finite. Let $\{\epsilon_n\}_{n\geq1}$ be a Rademacher sequence on a probability space $(\Omega',\mathcal{F}',\Pp')$ independent of $\{X_n\}_{n\geq1}$. Then, since $X_1$ is symmetric,
$$\E\|\R f\|_{\h^p}^q=\E\E_{\epsilon}\|\R_{\epsilon}\R f\|_{\h^p}^q.$$
In the case $p\geq q$, we can use H\"older's inequality, Fubini's theorem and Khintchine's inequality to obtain that
\begin{align*}
\E\|\R f\|_{\h^p}^q
&=\E\E_{\epsilon}\left(\int_{\T^{\infty}}\left|\sum_{n=1}^Na_n\epsilon_nX_nz^{\alpha(n)}\right|^pdm_{\infty}(z)\right)^{q/p}\\
&\leq\E\left(\E_{\epsilon}\int_{\T^{\infty}}\left|\sum_{n=1}^Na_n\epsilon_nX_nz^{\alpha(n)}\right|^pdm_{\infty}(z)\right)^{q/p}\\
&\asymp\E\left(\sum_{n=1}^N|a_nX_n|^2\right)^{q/2}\\
&=\E\|\R f\|_{\h^2}^q,
\end{align*}
and
$$\E\|\R f\|_{\h^p}^q=\E\E_{\epsilon}\|\R_{\epsilon}\R f\|_{\h^p}^q
\geq\E\E_{\epsilon}\|\R_{\epsilon}\R f\|_{\h^q}^q\asymp\E\|\R f\|_{\h^2}^q.$$
The case $p<q$ is similar.
\end{proof}

We are now in a position to prove Theorem \ref{main2}.

\begin{proof}[Proof of Theorem \ref{main2}]
If $X_1$ is constant a.s., then there is nothing to prove. Assume that $X_1$ is non-constant, and write $f(s)=\sum_{n=1}^{\infty}a_nn^{-s}$.

(1) Since $f\in(\h^p)_{\star}$ and $X_1\in L^t(\Omega)$, we have by Theorem \ref{main1}, $f\in\h^2$, or equivalently, $\sum_{n=1}^{\infty}|a_n|^2<\infty$. Since $0<p\leq 2$, we have for any $1\leq N\leq M<\infty$,
$$\E\left\|\sum_{n=N}^Ma_nX_nn^{-s}\right\|_{\h^p}^q
\leq\E\left\|\sum_{n=N}^Ma_nX_nn^{-s}\right\|_{\h^2}^q=\E\left(\sum_{n=N}^M|a_nX_n|^2\right)^{q/2}.$$
If $0<q\leq2$, then
\begin{align*}
\E\left\|\sum_{n=N}^Ma_nX_nn^{-s}\right\|_{\h^p}^q
&\leq\E\left(\sum_{n=N}^M|a_nX_n|^2\right)^{q/2}\\
&\leq\left(\E\sum_{n=N}^M|a_nX_n|^2\right)^{q/2}\\
&=\left(\E|X_1|^2\right)^{q/2}\left(\sum_{n=N}^M|a_n|^2\right)^{q/2},
\end{align*}
and if $2<q<\infty$, then
\begin{align*}
\E\left\|\sum_{n=N}^Ma_nX_nn^{-s}\right\|_{\h^p}^q
&\leq\E\left(\sum_{n=N}^M|a_nX_n|^2\right)^{q/2}\\
&\leq\E\left(\sum_{n=N}^M|a_n|^2\right)^{\frac{q}{2}-1}\left(\sum_{n=N}^M|a_n|^2|X_n|^q\right)\\
&=\left(\E|X_1|^q\right)\left(\sum_{n=N}^{M}|a_n|^2\right)^{q/2}.
\end{align*}
Therefore, we have
$$\E\left\|\sum_{n=N}^Ma_nX_nn^{-s}\right\|_{\h^p}^q\to0$$
as $M,N\to\infty$. Consequently, the series $\sum_{n=1}^{\infty}a_nX_nn^{-s}$ converges in $L^q(\Omega;\h^p)$. That is, $\E\|\R f\|_{\h^p}^q<\infty$.

(2) Suppose that $2<p<\infty$ and $\E X_1=0$. By Theorem \ref{main1}, $f\in\h^2$. We first assume that $X_1$ is symmetric. Then for any $1\leq N\leq M<\infty$, Lemma \ref{p2} yields that
$$\E\left\|\sum_{n=N}^Ma_nX_nn^{-s}\right\|_{\h^p}^q\asymp\E\left\|\sum_{n=N}^Ma_nX_nn^{-s}\right\|_{\h^2}^q.$$
Hence the same argument as in (1) gives that $\E\|\R f\|_{\h^p}^q<\infty$.

We now consider the non-symmetric case. Let $\{\widetilde{X}_n\}_{n\geq1}$ be an independent copy of $\{X_n\}_{n\geq1}$. Since $\E X_1=0$, we have
$$(\R f)_{\omega}(s)=\E_{\omega'}\left((\R f)_{\omega}(s)-(\widetilde{\R f})_{\omega'}(s)\right),$$
where $(\R f)_{\omega}(s)=\sum_{n=1}^{\infty}a_nX_n(\omega)n^{-s}$ and $(\widetilde{\R f})_{\omega'}(s)=\sum_{n=1}^{\infty}a_n\widetilde{X}_n(\omega')n^{-s}$. Combining the above equality with Minkowski's inequality yields that
$$\|(\R f)_{\omega}\|_{\h^p}^q
=\left\|\E_{\omega'}\left((\R f)_{\omega}-(\widetilde{\R f})_{\omega'}\right)\right\|_{\h^p}^q
\leq\left(\E_{\omega'}\left\|(\R f)_{\omega}-(\widetilde{\R f})_{\omega'}\right\|_{\h^p}\right)^q.$$
If $1\leq q<\infty$, then
\begin{equation}\label{omega-omega'}
\|(\R f)_{\omega}\|_{\h^p}^q\leq\E_{\omega'}\left\|(\R f)_{\omega}-(\widetilde{\R f})_{\omega'}\right\|_{\h^p}^q,
\end{equation}
and so
$$\E\|\R f\|_{\h^p}^q\leq\E_{\omega}\E_{\omega'}\left\|(\R f)_{\omega}-(\widetilde{\R f})_{\omega'}\right\|_{\h^p}^q<\infty$$
by the symmetric case. If $0<q<1$, then
\begin{align}\label{o-o'<1}
\E\|\R f\|_{\h^p}^q
&\leq\E_{\omega}\left(\E_{\omega'}\left\|(\R f)_{\omega}-(\widetilde{\R f})_{\omega'}\right\|_{\h^p}\right)^q\nonumber\\
&\leq\left(\E_{\omega}\E_{\omega'}\left\|(\R f)_{\omega}-(\widetilde{\R f})_{\omega'}\right\|_{\h^p}\right)^q.
\end{align}
Since $X_1\in L^2(\Omega)$, the same argument as in the symmetric case actually shows
$$\E_{\omega}\E_{\omega'}\left\|(\R f)_{\omega}-(\widetilde{\R f})_{\omega'}\right\|_{\h^p}<\infty.$$
Hence we also have $\E\|\R f\|_{\h^p}^q<\infty$ for $0<q<1$, which finishes the proof.
\end{proof}

\section{The mixed norm space case}\label{mixed-case}

In this section, we consider randomization in the context of mixed norm spaces of Dirichlet series. Before proceeding, we establish a characterization for the spaces $\h^{2,q}_{\alpha}$ using logarithmic dyadic sums, which has its own interest. For $k\geq0$, let $I_k$ be the logarithmic dyadic interval of positive integers defined by
$$I_k:=\left\{n:2^k\leq\log n<2^{k+1}\right\}.$$

\begin{theorem}\label{log-dyadic-characterization}
Let $0<q<\infty$ and $\alpha>-1$. Then $f(s)=\sum_{n=1}^{\infty}a_nn^{-s}$ belongs to $\h^{2,q}_{\alpha}$ if and only if
\begin{equation}\label{log-dyadic}
\sum_{k=0}^{\infty}2^{-k(\alpha+1)}
	\left(\sum_{n\in I_k}|a_n|^2\right)^{q/2}<\infty.
\end{equation}
\end{theorem}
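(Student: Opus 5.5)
Since $\h^2$ is a Hilbert space with orthonormal basis $\{n^{-s}\}$, the first step is to write down the integrand of the $\h^{2,q}_{\alpha}$ quasi-norm explicitly. For $f(s)=\sum_n a_nn^{-s}$ and $\sigma>0$ we have $f_\sigma(s)=\sum_n a_nn^{-\sigma}n^{-s}$, and therefore
$$\|f_\sigma\|_{\h^2}^2=\sum_{n=1}^\infty |a_n|^2 n^{-2\sigma}.$$
This identity holds first for Dirichlet polynomials. It passes to the completion by monotone convergence together with the fact (recalled from \cite{CGW}) that elements of $\h^{2,q}_{\alpha}$ are Dirichlet series whose coefficients are continuous functionals. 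To make this rigorous, I would prove the two-sided estimate for polynomials and then pass to limits. The two-sided bound gives a norm equivalence on $\mathcal{P}$ with the weighted sequence quasi-norm in \eqref{log-dyadic}, so both completions coincide. The remaining task is purely one-dimensional: show that
$$\int_0^\infty\Big(\sum_n|a_n|^2n^{-2\sigma}\Big)^{q/2}d\mu_\alpha(\sigma)\asymp\sum_{k\ge0}2^{-k(\alpha+1)}A_k^{q/2},\qquad A_k:=\sum_{n\in I_k}|a_n|^2.$$
Here $n=1$ and $n=2$ have $\log n<1$, so they do not lie in any $I_k$. I would treat them as an extra block $A_{-1}$ with weight comparable to $1$; this block is harmless because $\mu_\alpha$ is finite.

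\textbf{Lower bound.} Split $(0,\infty)$ into the intervals $J_k=[2^{-k-2},2^{-k-1}]$ together with $[1,\infty)$. For $\sigma\in J_k$ and $n\in I_k$ we have $n^{-2\sigma}=e^{-2\sigma\log n}\ge e^{-1}$, so $S(\sigma):=\sum_n|a_n|^2n^{-2\sigma}\gtrsim A_k$ on $J_k$. Moreover $\mu_\alpha(J_k)\asymp 2^{-k(\alpha+1)}$ because $e^{-2\sigma}\asymp1$ there. Summing over $k$ gives
$$\int S^{q/2}\,d\mu_\alpha\gtrsim\sum_k2^{-k(\alpha+1)}A_k^{q/2}.$$

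\textbf{Upper bound.} For $\sigma\in J_j$ (take $j\ge0$; the range $\sigma\ge1/4$ is handled similarly and is easier), and for $n\in I_k$ with $k\geq j$, we have $n^{-2\sigma}\le e^{-2\cdot2^{k}2^{-j-2}}=e^{-2^{k-j-1}}$. Hence
$$S(\sigma)\le\sum_{k<j}A_k+\sum_{k\ge j}e^{-2^{k-j-1}}A_k.$$
Raising to the power $q/2$ and integrating over $J_j$ with mass $\asymp2^{-j(\alpha+1)}$ reduces the problem to a discrete Hardy-type inequality:
$$\sum_j2^{-j(\alpha+1)}\Big(\sum_{k< j}A_k\Big)^{q/2}+\sum_j 2^{-j(\alpha+1)}\Big(\sum_{k\ge j}e^{-2^{k-j-1}}A_k\Big)^{q/2}\lesssim\sum_k2^{-k(\alpha+1)}A_k^{q/2}.$$

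\textbf{Main step: the discrete inequality.} I expect this to be the main obstacle, and I would handle it as follows. Choose $0<\varepsilon<2(\alpha+1)/q$ when $q\ge 2$. In the first sum, Hölder's inequality gives
$$\Big(\sum_{k<j}A_k\Big)^{q/2}\lesssim\sum_{k<j}2^{\varepsilon (j-k)q/2}A_k^{q/2}.$$
Interchanging the sums then leaves the geometric series $\sum_{j>k}2^{-(j-k)((\alpha+1)-\varepsilon q/2)}<\infty$, which converges because $\alpha>-1$. When $q<2$, use instead the subadditivity of $x\mapsto x^{q/2}$, which gives the same geometric series without needing $\varepsilon$. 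The second sum is treated the same way. There the weights $e^{-2^{k-j-1}}$ decay super-exponentially in $k-j$, so after Hölder (or subadditivity) and interchanging, the factor $2^{(k-j)(\alpha+1)}$ that appears is absorbed by the decay. Combining the two bounds proves the theorem.
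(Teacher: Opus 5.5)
Your proposal is correct. Its skeleton matches the paper's: reduce to $\int_0^{\infty}\big(\sum_n|a_n|^2n^{-2\sigma}\big)^{q/2}\,d\mu_\alpha(\sigma)$, get the lower bound by pairing the $\sigma$-interval near $2^{-j}$ with the block $I_j$, and get the upper bound from a dyadic decomposition in $\sigma$. The differences are in how you close the upper bound and in the bookkeeping.

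For $0<q\le 2$ the paper uses no decomposition in $\sigma$ at all. It bounds $n^{-2\sigma}\le e^{-2^{k+1}\sigma}$ on $I_k$ and applies subadditivity of $x\mapsto x^{q/2}$ termwise. It then integrates exactly using $\int_0^\infty\kappa^{-\sigma}\,d\mu_\alpha(\sigma)=\big(2/(2+\log\kappa)\big)^{\alpha+1}$, which is shorter than your route.

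For $q>2$ the paper normalizes $b_k=2^{-2k(\alpha+1)/q}A_k$. It then treats your two sums ($k<j$ and $k\ge j$) at once as a convolution $b*\lambda$ with the single kernel $\lambda_m=2^{-2m(\alpha+1)/q}e^{-2^{-m}}\in\ell^1(\mathbb{Z})$, and applies Young's inequality in $\ell^{q/2}$. The range $\sigma\ge 1$ is handled by a separate H\"older estimate. Your weighted H\"older estimate with auxiliary $\varepsilon<2(\alpha+1)/q$ is a Schur-test proof of that same convolution bound. It is a little more hands-on, but it gives one argument covering all $q$.

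You also identify $\h^{2,q}_\alpha$ with the weighted sequence space yourself, via norm equivalence on $\mathcal{P}$ plus continuity of the coefficient functionals. The paper instead simply invokes \cite[Equation (2.7)]{CGW} for the integral formula for general $f$.

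There are two harmless slips:
\begin{enumerate}
\item On $J_k\times I_k$ you only have $2\sigma\log n<2$, so the constant is $e^{-2}$, not $e^{-1}$.
\item $\bigcup_{j\ge0}J_j=(0,1/2]$, so the leftover range in the upper bound is $\sigma>1/2$, not $\sigma\geq 1/4$. On that range $n^{-2\sigma}\le e^{-2^k}$ for $n\in I_k$, and finiteness of $\mu_\alpha$ finishes the estimate.
\end{enumerate}
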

\begin{proof}
By \cite[Equation (2.7)]{CGW}, we know that $f\in\h^{2,q}_{\alpha}$ if and only if
$$\int_0^{+\infty}\left(\sum_{n=1}^{\infty}|a_n|^2n^{-2\sigma}\right)^{q/2}d\mu_{\alpha}(\sigma)<\infty.$$
Suppose first that $f\in\h^{2,q}_{\alpha}$. Then by the definition of $I_k$, we have
\begin{align*}
\infty&>\int_{0}^{+\infty}\left(\sum_{n=1}^{\infty}
  |a_n|^2n^{-2\sigma}\right)^{q/2}d\mu_{\alpha}(\sigma)\\
&\geq\int_0^1\left(\sum_{k=0}^{\infty}\sum_{n\in I_k}
  |a_n|^2n^{-2\sigma}\right)^{q/2}d\mu_{\alpha}(\sigma)\\
&>\int_0^1\left(\sum_{k=0}^{\infty}\sum_{n\in I_k}
  |a_n|^2\left(e^{2^{k+1}}\right)^{-2\sigma}\right)^{q/2}
  d\mu_{\alpha}(\sigma)\\
&=\sum_{j=0}^{\infty}\int_{2^{-j-1}}^{2^{-j}}
  \left(\sum_{k=0}^{\infty}\sum_{n\in I_k}
  |a_n|^2\left(e^{2^{k+1}}\right)^{-2\sigma}\right)^{q/2}
  d\mu_{\alpha}(\sigma)\\
&\geq\sum_{j=0}^{\infty}\left(\sum_{k=0}^{\infty}\sum_{n\in I_k}
  |a_n|^2e^{-2^{k-j+2}}\right)^{q/2}
  \int_{2^{-j-1}}^{2^{-j}}d\mu_{\alpha}(\sigma)\\
&\asymp\sum_{j=0}^{\infty}2^{-j(\alpha+1)}
  \left(\sum_{k=0}^{\infty}\sum_{n\in I_k}
  |a_n|^2e^{-2^{k-j+2}}\right)^{q/2}\\
&\gtrsim\sum_{j=0}^{\infty}2^{-j(\alpha+1)}
  \left(\sum_{n\in I_j}|a_n|^2\right)^{q/2}.
\end{align*}
That is, \eqref{log-dyadic} holds.

Conversely, suppose that \eqref{log-dyadic} holds. Since every Dirichlet polynomial belongs to the space $\h^{2,q}_{\alpha}$, it is enough to show that
$$\int_0^{+\infty}\left(\sum_{n=3}^{\infty}|a_n|^2n^{-2\sigma}\right)^{q/2}d\mu_{\alpha}(\sigma)<\infty.$$
In the case $0<q\leq2$, we apply the definition of $I_k$ and H\"older's inequality to establish that
\begin{align*}
\int_0^{+\infty}\left(\sum_{n=3}^{\infty}
  |a_n|^2n^{-2\sigma}\right)^{q/2}d\mu_{\alpha}(\sigma)
&=\int_0^{+\infty}\left(\sum_{k=0}^{\infty}\sum_{n\in I_k}
  |a_n|^2n^{-2\sigma}\right)^{q/2}d\mu_{\alpha}(\sigma)\\
&\leq\int_0^{+\infty}\left(\sum_{k=0}^{\infty}\sum_{n\in I_k}
  |a_n|^2\left(e^{2^k}\right)^{-2\sigma}\right)^{q/2}
  d\mu_{\alpha}(\sigma)\\
&\leq\int_0^{+\infty}\sum_{k=0}^{\infty}
  \left(\sum_{n\in I_k}|a_n|^2\right)^{q/2}e^{-2^kq\sigma}
  d\mu_{\alpha}(\sigma)\\
&=\sum_{k=0}^{\infty}\left(\sum_{n\in I_k}|a_n|^2\right)^{q/2}
  \int_0^{+\infty}e^{-2^kq\sigma}d\mu_{\alpha}(\sigma).
\end{align*}
An elementary calculation gives that for $\kappa>1$,
$$\int_{0}^{+\infty}\kappa^{-\sigma}d\mu_{\alpha}(\sigma)
=\left(\frac{2}{2+\log\kappa}\right)^{\alpha+1}.$$
Therefore,
\begin{align*}
\int_0^{+\infty}\left(\sum_{n=3}^{\infty}
  |a_n|^2n^{-2\sigma}\right)^{q/2}d\mu_{\alpha}(\sigma)
&\leq\sum_{k=0}^{\infty}\left(\frac{2}{2+2^kq}\right)^{\alpha+1}
  \left(\sum_{n\in I_k}|a_n|^2\right)^{q/2}\\
&\lesssim\sum_{k=0}^{\infty}2^{-k(\alpha+1)}
  \left(\sum_{n\in I_k}|a_n|^2\right)^{q/2}<\infty.
\end{align*}
We next consider the case $2<q<\infty$. Write
$$b_k=2^{-\frac{2k(\alpha+1)}{q}}\sum_{n\in I_k}|a_n|^2$$
for $k\geq0$, and let $b_k=0$ for $k<0$. Then the condition \eqref{log-dyadic} is equivalent to $b:=\{b_k\}_{k\in\Z}\in l^{q/2}(\Z)$. For any $\sigma>0$,
\begin{equation*}
\sum_{n=3}^{\infty}|a_n|^2n^{-2\sigma}
\leq\sum_{k=0}^{\infty}e^{-2^{k+1}\sigma}\sum_{n\in I_k}|a_n|^2
=\sum_{k=0}^{\infty}b_k2^{\frac{2k(\alpha+1)}{q}}e^{-2^{k+1}\sigma},
\end{equation*}
which, combined with H\"older's inequality, yields that
\begin{align*}
\int_1^{+\infty}\left(\sum_{n=3}^{\infty}
  |a_n|^2n^{-2\sigma}\right)^{q/2}d\mu_{\alpha}(\sigma)
&\leq\int_1^{+\infty}\left(\sum_{k=0}^{\infty}
  b_k2^{\frac{2k(\alpha+1)}{q}}e^{-2^{k+1}\sigma}\right)^{q/2}
  d\mu_{\alpha}(\sigma)\\
&\leq\left(\sum_{k=0}^{\infty}
  b_k2^{\frac{2k(\alpha+1)}{q}}e^{-2^{k+1}}\right)^{q/2}\\
&\leq\left(\sum_{k=0}^{\infty}b_k^{q/2}\right)
  \left(\sum_{k=0}^{\infty}
  \left(2^{\frac{2k(\alpha+1)}{q}}e^{-2^{k+1}}\right)^{\frac{q}{q-2}}\right)^{\frac{q}{2}-1}\\
&<\infty.
\end{align*}
For the integral over $[0,1]$, we have
\begin{align*}
&\int_0^{1}\left(\sum_{n=3}^{\infty}
  |a_n|^2n^{-2\sigma}\right)^{q/2}d\mu_{\alpha}(\sigma)\\
&\ \leq\sum_{j=0}^{\infty}\int_{2^{-j-1}}^{2^{-j}}
  \left(\sum_{k=0}^{\infty}
  b_k2^{\frac{2k(\alpha+1)}{q}}e^{-2^{k+1}\sigma}\right)^{q/2}
  d\mu_{\alpha}(\sigma)\\
&\ \leq\sum_{j=0}^{\infty}\int_{2^{-j-1}}^{2^{-j}}
  \left(2^{\frac{2j(\alpha+1)}{q}}
  \sum_{k=0}^{\infty}b_k2^{\frac{2(k-j)(\alpha+1)}{q}}
  e^{-2^{k-j}}\right)^{q/2}d\mu_{\alpha}(\sigma)\\
&\ \asymp\sum_{j=0}^{\infty}
  \left(\sum_{k=0}^{\infty}b_k\lambda_{j-k}\right)^{q/2},
\end{align*}
where $\lambda_m=2^{-\frac{2m(\alpha+1)}{q}}e^{-2^{-m}}$ for $m\in\Z$. Note that
$$\sum_{m\in\Z}\lambda_m
\leq\sum_{m=0}^{\infty}2^{-\frac{2m(\alpha+1)}{q}}
+\sum_{m=1}^{\infty}2^{\frac{2m(\alpha+1)}{q}}e^{-2^m}<\infty.$$
That is, $\lambda:=\{\lambda_m\}_{m\in\Z}\in l^1(\Z)$. Consequently, noting that $q/2>1$, we can apply Minkowski's inequality for convolution to obtain that
\begin{align*}
\int_0^{1}\left(\sum_{n=3}^{\infty}
  |a_n|^2n^{-2\sigma}\right)^{q/2}d\mu_{\alpha}(\sigma)
&\lesssim\sum_{j=0}^{\infty}
  \left(\sum_{k=0}^{\infty}b_k\lambda_{j-k}\right)^{q/2}\\
&\leq\|b*\lambda\|_{l^{q/2}(\Z)}^{q/2}\\
&\leq\|b\|_{l^{q/2}(\Z)}^{q/2}\cdot\|\lambda\|_{l^1(\Z)}^{q/2}\\
&<\infty.
\end{align*}
The proof is complete.
\end{proof}

The following lemma will be useful in the sequel.

\begin{lemma}\label{sigma-integral}
Let $0<q<\infty$, $t=\max\{2,q\}$, and let $\{X_n\}_{n\geq1}$ be a sequence of i.i.d. random variables. If $X_1\in L^t(\Omega)$, then for any $f\in\h^2$ and any $\sigma\geq0$,
$$\E\|\R f_{\sigma}\|_{\h^2}^q
\leq\left(\E|X_1|^t\right)^{q/t}\|f_{\sigma}\|_{\h^2}^q.$$
\end{lemma}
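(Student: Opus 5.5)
Since $f_\sigma(s)=\sum_{n}a_nn^{-\sigma}n^{-s}$ and $(\R f)_\sigma=\R(f_\sigma)$, it suffices to prove the inequality for an arbitrary $g\in\h^2$ with coefficients $c_n=a_nn^{-\sigma}$ in place of $f_\sigma$. By definition of the $\h^2$ norm,
$$\E\|\R g\|_{\h^2}^q=\E\left(\sum_{n=1}^{\infty}|c_n|^2|X_n|^2\right)^{q/2}.$$
Here the right-hand side is understood as the (possibly infinite) nonnegative series. The argument is the same computation as in the proof of Theorem \ref{main2}(1), now with $M=\infty$ and keeping track of constants. We split into two cases according to $q$.

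\emph{Case $0<q\le 2$, so $t=2$.} The function $x\mapsto x^{q/2}$ is concave. By Jensen's inequality and then monotone convergence,
$$\E\left(\sum_n|c_n|^2|X_n|^2\right)^{q/2}\le\left(\E\sum_n|c_n|^2|X_n|^2\right)^{q/2}=\left(\E|X_1|^2\right)^{q/2}\|g\|_{\h^2}^q.$$
This is exactly the claimed bound.

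\emph{Case $q>2$, so $t=q$.} If $g=0$ there is nothing to prove. Otherwise, put $S=\sum_n|c_n|^2$ and regard $\{|c_n|^2/S\}$ as probability weights. Convexity of $x\mapsto x^{q/2}$ (or Hölder's inequality with exponents $q/2$ and $q/(q-2)$) gives
$$\left(\sum_n|c_n|^2|X_n|^2\right)^{q/2}\le S^{q/2-1}\sum_n|c_n|^2|X_n|^q.$$
Taking expectations and using monotone convergence, the right-hand side becomes $S^{q/2}\E|X_1|^q=(\E|X_1|^q)^{q/q}\|g\|_{\h^2}^q$, which is again the claimed bound.

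There is no real obstacle. The only points of care are the following:
\begin{itemize}
\item Interpret $\|\R g\|_{\h^2}$ as $\left(\sum_n|c_n|^2|X_n|^2\right)^{1/2}$ even before knowing a priori that $\R g\in\h^2$; the bound then shows this quantity is a.s. finite.
\item Justify the interchange of expectation with the infinite sum, which is monotone convergence (or apply the argument to partial sums and pass to the limit).
\item Note that $\|g\|_{\h^2}<\infty$ for $\sigma\ge0$, since $|c_n|\le|a_n|$.
\end{itemize}
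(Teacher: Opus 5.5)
Your proof is correct. For $0<q\le 2$ it is the paper's argument: Jensen/H\"older in $\Omega$ applied to $\sum_n |a_n|^2|X_n|^2 n^{-2\sigma}$.

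For $q>2$ your route differs slightly from the paper's:
\begin{itemize}
\item You apply H\"older (convexity of $x^{q/2}$) pointwise in the summation index, with weights $|c_n|^2$, and only then take expectations. This is the same computation the paper uses in the proof of Theorem \ref{main2}(1).
\item The paper applies Minkowski's inequality in $L^{q/2}(\Omega)$, i.e.\ convexity in the $\omega$-variable. This bounds $\E\|\R f_\sigma\|_{\h^2}^q$ by $\bigl(\sum_n (\E|a_nX_n|^q n^{-q\sigma})^{2/q}\bigr)^{q/2}$.
\end{itemize}
For identically distributed $X_n$ both routes give exactly the constant $\E|X_1|^q$, and neither uses independence. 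The Minkowski bound would be marginally sharper if the moments $\E|X_n|^q$ varied with $n$; this follows by applying Jensen to your weighted average.

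Your remarks on monotone convergence, and on reading $\|\R f_\sigma\|_{\h^2}$ as the coefficient sum before knowing it is finite, fill in details the paper leaves implicit.
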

\begin{proof}
Let $f(s)=\sum_{n=1}^{\infty}a_nn^{-s}$ belong to $\h^2$ and $\sigma\geq0$. In the case $0<q\leq2$, H\"older's inequality yields that
\begin{align*}
\E\|\R f_{\sigma}\|_{\h^2}^q
&=\E\left(\sum_{n=1}^{\infty}|a_nX_n|^2n^{-2\sigma}\right)^{q/2}\\
&\leq\left(\E\sum_{n=1}^{\infty}|a_nX_n|^2
  n^{-2\sigma}\right)^{q/2}\\
&=\left(\E|X_1|^2\right)^{q/2}\|f_{\sigma}\|_{\h^2}^q,
\end{align*}
and in the case $2<q<\infty$, Minkowski's inequality gives that
\begin{align*}
\E\|\R f_{\sigma}\|_{\h^2}^q
&=\E\left(\sum_{n=1}^{\infty}|a_nX_n|^2n^{-2\sigma}\right)^{q/2}\\
&\leq\left(\sum_{n=1}^{\infty}
  \left(\E|a_nX_n|^qn^{-q\sigma}\right)^{2/q}\right)^{q/2}\\
&=\left(\E|X_1|^q\right)\|f_{\sigma}\|_{\h^2}^q.\qedhere
\end{align*}
\end{proof}

The following lemma considers randomization with symmetry assumptions in the setting of mixed norm spaces, whose proof is similar to that of Lemma \ref{symmetric} and so is omitted.

\begin{lemma}\label{mixed-symmetric}
Let $0<p,q<\infty$, $\alpha>-1$, and let $\{X_n\}_{n\geq1}$ be a sequence of i.i.d. symmetric random variables. Then $(\h^{p,q}_{\alpha})_{\star}=(\h^{2,q}_{\alpha})_{\star}$.
\end{lemma}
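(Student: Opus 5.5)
We follow the proof of Lemma \ref{symmetric}, with \cite[Theorem 3.2]{CGW} for mixed norm spaces, i.e. $(\h^{p,q}_{\alpha})_{\star}=\h^{2,q}_{\alpha}$ for Rademacher randomization, in place of its Hardy space version. Fix $f(s)=\sum_{n=1}^{\infty}a_nn^{-s}\in\dd$. Let $\{\epsilon_n\}_{n\geq1}$ be a Rademacher sequence on $(\Omega',\mathcal{F}',\Pp')$, independent of $\{X_n\}_{n\geq1}$.

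\textbf{Step 1 (symmetry).} Since each $X_n$ is symmetric and the $X_n$ are independent, for every fixed sign sequence $(\epsilon_n(\omega'))_{n\geq1}$ the sequence $\{\epsilon_n(\omega')X_n\}_{n\geq1}$ has the same joint law as $\{X_n\}_{n\geq1}$. The event $\{\R f\in\h^{p,q}_{\alpha}\}$ is determined by the coefficient sequence $(a_nX_n)_{n\geq1}$ through a measurable condition; this is implicit in the definition of $\mathscr{X}_{\star}$. Hence $\Pp(\R f\in\h^{p,q}_{\alpha})=1$ if and only if, for almost every $\omega'\in\Omega'$, $\Pp(\R_{\epsilon}\R f\in\h^{p,q}_{\alpha})=1$. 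The same equivalence holds with $p$ replaced by $2$.

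\textbf{Step 2 (Fubini).} By the Fubinization principle \cite[Proposition III.7, p. 26]{LQ18}, applied on the product space $\Omega\times\Omega'$, the last condition is equivalent to $\Pp'(\R_{\epsilon}\R f\in\h^{p,q}_{\alpha})=1$ for $\Pp$-almost every $\omega\in\Omega$.

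\textbf{Step 3 (Rademacher case).} Fix such an $\omega$ and put $g_\omega(s)=\sum_{n}a_nX_n(\omega)n^{-s}$, so that $\R_{\epsilon}\R f=\R_{\epsilon}g_\omega$. Here $g_\omega$ should be understood as a formal Dirichlet series; we have to check that the Rademacher result applies to it. \cite[Theorem 3.2]{CGW} says that $\Pp'(\R_\epsilon g_\omega\in\h^{p,q}_{\alpha})=1$ if and only if $g_\omega\in\h^{2,q}_{\alpha}$. Combining Steps 1--3 gives
\[
f\in(\h^{p,q}_{\alpha})_{\star}\iff \R f\in\h^{2,q}_{\alpha}\ \text{a.s.}\iff f\in(\h^{2,q}_{\alpha})_{\star}.
\]

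\textbf{Main obstacle.} The delicate point is Step 3: $g_\omega$ need not lie a priori in $\dd$, whereas the symbol space is defined for $f\in\dd$. There are two ways to handle this. First, one can observe that membership of a random series in $\h^{p,q}_{\alpha}$ (which would force convergence on $\C_{1/2}$) and membership in $\h^{2,q}_{\alpha}$ both depend only on the coefficients, and the Rademacher theorem holds for arbitrary coefficient sequences. Second, one can use Theorem \ref{log-dyadic-characterization}: the condition $g_\omega\in\h^{2,q}_{\alpha}$ is the purely coefficient condition \eqref{log-dyadic}, applied to the coefficients $a_nX_n(\omega)$. The measurability needed for Fubini then follows because both the $\h^{p,q}_\alpha$-membership and the condition \eqref{log-dyadic} can be written through countably many Dirichlet polynomial partial sums.
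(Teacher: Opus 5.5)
Your proof is correct and is exactly the argument the paper intends. The paper omits it as ``similar to that of Lemma \ref{symmetric}'': symmetrize with an independent Rademacher sequence, Fubinize, and invoke the mixed-norm Rademacher case of \cite[Theorem 3.2]{CGW}. The domain subtlety you flag is harmless and takes one line: Rademacher signs do not change $|a_nX_n(\omega)|$, so if $\R_{\epsilon}g_\omega\in\h^{p,q}_{\alpha}$ then it converges on $\C_{1/2}$, hence absolutely on $\C_{3/2}$, so $g_\omega\in\dd$; and $g_\omega\in\h^{2,q}_{\alpha}$ trivially gives $g_\omega\in\dd$.
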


We now establish a sufficient condition for the sequence $\{X_n\}_{n\geq1}$ such that $\h^{2,q}_{\alpha}\subset(\h^{p,q}_{\alpha})_{\star}$.

\begin{theorem}\label{mixed-suff1}
Let $0<p,q<\infty$, $\alpha>-1$, and let $\{X_n\}_{n\geq1}$ be a sequence of i.i.d. random variables. Let $t=\max\{2,q\}$.
\begin{enumerate}
	\item If $0<p\leq 2$ and $X_1\in L^t(\Omega)$, then for any $f\in\h^{2,q}_{\alpha}$, $\E\|\R f\|_{\h^{p,q}_{\alpha}}^q<\infty$, and in particular, $\h^{2,q}_{\alpha}\subset(\h^{p,q}_{\alpha})_{\star}$.
	\item If $2<p<\infty$, $X_1\in L^t(\Omega)$ and $\E X_1=0$, then for any $f\in\h^{2,q}_{\alpha}$, $\E\|\R f\|_{\h^{p,q}_{\alpha}}^q<\infty$, and in particular, $\h^{2,q}_{\alpha}\subset(\h^{p,q}_{\alpha})_{\star}$.
\end{enumerate}
\end{theorem}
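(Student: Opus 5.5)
The plan is to reduce everything to a pointwise-in-$\sigma$ estimate and then integrate against $d\mu_{\alpha}$, using Lemma \ref{sigma-integral} and the characterization of $\h^{2,q}_{\alpha}$ through \cite[Equation (2.7)]{CGW}, namely $\|f\|_{\h^{2,q}_{\alpha}}^q\asymp\int_0^{+\infty}\|f_{\sigma}\|_{\h^2}^q\,d\mu_{\alpha}(\sigma)$. Fix $f\in\h^{2,q}_{\alpha}$. Every element of $\h^{2,q}_{\alpha}$ converges on $\C_{1/2}$, so $f_{\sigma}\in\h^2$ for each $\sigma>0$. Hence Lemma \ref{sigma-integral} applies for every $\sigma>0$, and the boundary point $\sigma=0$ has $\mu_{\alpha}$-measure zero.

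\textbf{Case (1), $0<p\le2$.} Since $\h^2$ embeds contractively into $\h^p$, for each $\sigma>0$ and almost every $\omega$ we have $\|(\R f)_{\sigma}\|_{\h^p}\le\|(\R f)_{\sigma}\|_{\h^2}$. Note that $(\R f)_{\sigma}=\R(f_{\sigma})$. By Tonelli and Lemma \ref{sigma-integral},
\[
\E\|\R f\|_{\h^{p,q}_{\alpha}}^q\le\int_0^{+\infty}\E\|\R f_{\sigma}\|_{\h^2}^q\,d\mu_{\alpha}(\sigma)\le(\E|X_1|^t)^{q/t}\int_0^{+\infty}\|f_{\sigma}\|_{\h^2}^q\,d\mu_{\alpha}(\sigma)<\infty.
\]
One technical point must be settled: the mixed norm is defined by completion from polynomials, so we must justify that $\R f$ is the element whose norm is this integral. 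To do this, we apply the same estimate to the tails $\sum_{N}^{M}a_nX_nn^{-s}$, which are polynomials. This shows that the partial sums $\R f$ are Cauchy in $L^q(\Omega;\h^{p,q}_{\alpha})$, using the same integral with $f$ replaced by the tail. The tail integral tends to zero by dominated convergence, because $\|(f-S_Nf)_{\sigma}\|_{\h^2}\le\|f_{\sigma}\|_{\h^2}$. The limit then coincides a.s. with $\R f$, because convergence in $\h^{p,q}_{\alpha}$ implies coefficientwise convergence. Finiteness of the $q$-th moment gives $\R f\in\h^{p,q}_{\alpha}$ a.s.

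\textbf{Case (2), $2<p<\infty$ and $\E X_1=0$.} First suppose $X_1$ is symmetric. For each fixed $\sigma$, Lemma \ref{p2} applied to the polynomial $(\text{tail of }f)_{\sigma}$ gives
\[
\E\|\R g_{\sigma}\|_{\h^p}^q\asymp\E\|\R g_{\sigma}\|_{\h^2}^q,
\]
with constants depending only on $p$ and $q$, uniformly in $\sigma$. Integrating in $\sigma$ and repeating the Cauchy argument above then gives the result. For general mean-zero $X_1$, introduce an independent copy $\widetilde X$. Then $X-\widetilde X$ is symmetric and lies in $L^t$, so the symmetric case bounds $\E\E'\|\R f-\widetilde{\R f}\|^q$. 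Now $\R f=\E_{\omega'}(\R f-\widetilde{\R f})$. For $q\ge1$, Minkowski/Jensen in the norm of $\h^{p,q}_{\alpha}$ (a Banach space when $p,q\ge1$, and $p>2$ here) gives the bound, exactly as in \eqref{omega-omega'}. For $0<q<1$, we instead argue as in \eqref{o-o'<1}: bound by the first moment $\E\E'\|\cdot\|_{\h^{p,q}_{\alpha}}$. Since $\h^{p,q}_\alpha$ is only a quasi-norm when $q<1$, it is safer to apply Jensen pointwise in $\sigma$ to $\|\cdot\|_{\h^p}$ and then control $\E\big(\int(\E'\|\cdot\|_{\h^p})^qd\mu_\alpha\big)$ by $\E\E'\int\|\cdot\|_{\h^p}^q d\mu_\alpha$ using Jensen with exponent $q<1$ inside the $\sigma$-integral.

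The main obstacle is this final reduction from symmetric to mean-zero when $q<1$, where the outer structure is only a quasi-norm. The pointwise-in-$\sigma$ Jensen approach handles it: for each $\sigma$, $\|\R f_\sigma\|_{\h^p}\le\E'\|\R f_\sigma-\widetilde{\R f}_\sigma\|_{\h^p}$, and $(\E' Z)^q\le\E' Z^q$ fails for $q<1$. So one instead uses $X_1\in L^2$: apply Minkowski with exponent $1$ in $\omega'$ after first bounding by the $\h^2$-type quantity of Lemma \ref{p2} with moment exponent $1$. Alternatively, bound $\E\int(\E'\|\cdot\|_{\h^p})^q\,d\mu_\alpha$ by Hölder in $\omega$, reducing to $\int(\E\E'\|\cdot\|_{\h^p})^q d\mu_\alpha\lesssim\int\|f_\sigma\|_{\h^2}^q d\mu_\alpha$ via Lemma \ref{p2} with exponent $1$.
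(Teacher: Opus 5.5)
Your proof is correct and is essentially the paper's: for $p\le2$ you use the pointwise-in-$\sigma$ bound via $\h^2\subset\h^p$ and Lemma \ref{sigma-integral}, and for $p>2$ you use symmetrization with an independent copy, Lemma \ref{p2} applied to $X_n-\widetilde{X}_n$, and the reductions \eqref{omega-omega'} ($q\ge1$) and \eqref{o-o'<1} ($q<1$) on the polynomial tails $\sum_{n=N}^M a_nX_nn^{-s}$; your explicit Cauchy/completion argument in case (1) is extra rigor the paper omits. For $q<1$ keep only your ``Alternatively'' route, which is exactly the paper's (Jensen in $\omega$ to reach $\int_0^{+\infty}(\E\E'\|\cdot\|_{\h^p})^q\,d\mu_{\alpha}$, then Lemmas \ref{p2} and \ref{sigma-integral} with exponent $1$), because $(\E'Z)^q\le\E'Z^q$ is false and Lemma \ref{p2} gives nothing for $\E'$ with $\omega$ fixed; also, $f_{\sigma}\in\h^2$ for all $\sigma>0$ does not follow from convergence on $\C_{1/2}$ alone, but from $\int_0^{+\infty}\|f_{\sigma}\|_{\h^2}^q\,d\mu_{\alpha}(\sigma)<\infty$ plus monotonicity of $\sigma\mapsto\|f_{\sigma}\|_{\h^2}$ (or \cite[Theorem 2.3]{CGW}).
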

\begin{proof}
(1) Suppose $0<p\leq 2$ and $X_1\in L^t(\Omega)$. For any $f\in\h^{2,q}_{\alpha}$, \cite[Theorem 2.3]{CGW} implies that $f_{\sigma}\in\h^2$ for every $\sigma>0$. Hence by Lemma \ref{sigma-integral},
\begin{align*}
\E\|\R f\|_{\h^{p,q}_{\alpha}}^q
&=\int_0^{+\infty}\E\|\R f_{\sigma}\|_{\h^p}^qd\mu_{\alpha}(\sigma)\\
&\leq\int_0^{+\infty}\E\|\R f_{\sigma}\|_{\h^2}^qd\mu_{\alpha}(\sigma)\\
&\leq\left(\E|X_1|^t\right)^{q/t}\int_0^{+\infty}\|f_{\sigma}\|_{\h^2}^qd\mu_{\alpha}(\sigma)\\
&=\left(\E|X_1|^t\right)^{q/t}\|f\|_{\h^{2,q}_{\alpha}}^q<\infty.
\end{align*}

(2) Suppose $2<p<\infty$, $X_1\in L^t(\Omega)$ and $\E X_1=0$, and let $f(s)=\sum_{n=1}^{\infty}a_nn^{-s}$ belong to $\h^{2,q}_{\alpha}$. Let $\{\widetilde{X}_n\}_{n\geq1}$ be an independent copy of $\{X_n\}_{n\geq1}$. In the case $1\leq q<\infty$, by \eqref{omega-omega'}, Lemmas \ref{p2} and \ref{sigma-integral}, we have for any $1\leq N\leq M<\infty$,
\begin{align*}
\E\left\|\sum_{n=N}^Ma_nX_nn^{-s}\right\|_{\h^{p,q}_{\alpha}}^q
&=\int_{0}^{+\infty}\E\left\|\sum_{n=N}^Ma_nX_nn^{-\sigma-s}\right\|_{\h^p}^qd\mu_{\alpha}(\sigma)\\
&\leq\int_0^{+\infty}\E_{\omega\times\omega'}\left\|\sum_{n=N}^Ma_n(X_n-\widetilde{X}_n)n^{-\sigma-s}\right\|_{\h^p}^qd\mu_{\alpha}(\sigma)\\
&\asymp\int_0^{+\infty}\E_{\omega\times\omega'}\left\|\sum_{n=N}^Ma_n(X_n-\widetilde{X}_n)n^{-\sigma-s}\right\|_{\h^2}^qd\mu_{\alpha}(\sigma)\\
&\leq\left(\E_{\omega\times\omega'}|X_1-\widetilde{X}_1|^t\right)^{q/t}
  \int_0^{+\infty}\left\|\sum_{n=N}^Ma_nn^{-\sigma-s}\right\|_{\h^2}^qd\mu_{\alpha}(\sigma).
\end{align*} 
Since $f\in\h^{2,q}_{\alpha}$ and $X_1\in L^t(\Omega)$, we obtain that
$$\E\left\|\sum_{n=N}^Ma_nX_nn^{-s}\right\|_{\h^{p,q}_{\alpha}}^q\to0$$
as $N,M\to\infty$, which implies that $\E\|\R f\|_{\h^{p,q}_{\alpha}}^q<\infty$. In the case $0<q<1$, since $X_1\in L^2(\Omega)$, similarly we deduce from \eqref{o-o'<1} that
\begin{align*}
&\E\left\|\sum_{n=N}^Ma_nX_nn^{-s}\right\|_{\h^{p,q}_{\alpha}}^q\\
&\ =\int_{0}^{+\infty}\E\left\|\sum_{n=N}^Ma_nX_nn^{-\sigma-s}\right\|_{\h^p}^qd\mu_{\alpha}(\sigma)\\
&\ \leq\int_0^{+\infty}\left(\E_{\omega\times\omega'}\left\|\sum_{n=N}^Ma_n(X_n-\widetilde{X}_n)n^{-\sigma-s}\right\|_{\h^p}\right)^q
  d\mu_{\alpha}(\sigma)\\
&\ \asymp\int_0^{+\infty}\left(\E_{\omega\times\omega'}\left\|\sum_{n=N}^Ma_n(X_n-\widetilde{X}_n)n^{-\sigma-s}\right\|_{\h^2}\right)^q
  d\mu_{\alpha}(\sigma)\\
&\ \leq\left(\E_{\omega\times\omega'}|X_1-\widetilde{X}_1|^2\right)^{q/2}
  \int_0^{+\infty}\left\|\sum_{n=N}^Ma_nn^{-\sigma-s}\right\|_{\h^2}^qd\mu_{\alpha}(\sigma)\to0
\end{align*}
as $N,M\to\infty$, and consequently, $\E\|\R f\|_{\h^{p,q}_{\alpha}}^q<\infty$.
\end{proof}

The following theorem gives the necessary condition for $\{X_n\}_{n\geq1}$ such that $\h^{2,q}_{\alpha}\subset(\h^{p,q}_{\alpha})_{\star}$.

\begin{theorem}\label{mixed-nece}
Let $0<p,q<\infty$, $\alpha>-1$, and let $\{X_n\}_{n\geq1}$ be a sequence of non-constant i.i.d. random variables. Suppose that $\h^{2,q}_{\alpha}\subset(\h^{p,q}_{\alpha})_{\star}$. Then $X_1\in L^q(\Omega)$. If, in addition, $2<p<\infty$ and $2\leq q<\infty$, then $\E X_1=0$.
\end{theorem}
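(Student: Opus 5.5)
The plan has two parts. First, extract $X_1\in L^q(\Omega)$ by testing the inclusion on Dirichlet series supported on primes, one prime in each block. Second, for $2<p<\infty$ and $q\geq2$, get $\E X_1=0$ by splitting off the mean and applying Theorem \ref{mixed-suff1}(2) to the centred variables.

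\textbf{Step 1: reduction to a weighted series of i.i.d. variables.} For each $k$ pick a prime $\mathfrak{p}_{(k)}\in I_k$; such primes exist for all large $k$ because $I_k=[e^{2^k},e^{2^{k+1}})$ is very long. Consider $f(s)=\sum_k c_k\mathfrak{p}_{(k)}^{-s}$. Under the Bohr lift, $\mathcal{B}f_\sigma(z)=\sum_k c_k\mathfrak{p}_{(k)}^{-\sigma}z_{j_k}$ with distinct coordinates $z_{j_k}$, and these are i.i.d. Steinhaus variables on $\T^\infty$. Khintchine's inequality therefore gives, for every realization,
$$\|\R f_\sigma\|_{\h^p}\asymp\|\R f_\sigma\|_{\h^2},$$
with constants depending only on $p$. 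Hence $\R f\in\h^{p,q}_\alpha$ iff $\R f\in\h^{2,q}_\alpha$. By Theorem \ref{log-dyadic-characterization}, applied both to $f$ and to $\R f$ (each block contains exactly one nonzero coefficient), the hypothesis becomes the following. Set $w_k=2^{-k(\alpha+1)}$ and $Y_k=|X_{\mathfrak p_{(k)}}|^q$, which are i.i.d. Then for every $c$ with $\sum_k w_k|c_k|^q<\infty$ we have $\sum_k w_k|c_k|^qY_k<\infty$ a.s. Since $u_k=w_k|c_k|^q$ ranges over all nonnegative $\ell^1$ sequences, the condition is that $\sum_k u_kY_k<\infty$ a.s. for every $u\in\ell^1_+$.

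\textbf{Step 2: an $\ell^1$ sequence with a divergent series.} Suppose $\E Y_1=\infty$. I must produce $u\in\ell^1_+$ with $\sum_k u_kY_k=\infty$ with positive probability. By the second Borel--Cantelli lemma it suffices to have $\sum_k\Pp(Y_1>1/u_k)=\infty$, because then $u_kY_k>1$ infinitely often. I would build $u$ by repetition: give the value $2^{-j}$ multiplicity $m_j$. The two requirements are then
$$\sum_j m_j2^{-j}<\infty\quad\text{and}\quad\sum_j m_j\Pp(Y_1>2^j)=\infty,$$
while $\E Y_1=\infty$ gives $\sum_j 2^j\Pp(Y_1>2^j)=\infty$.

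This is the step I expect to be the main obstacle. When $a_j:=2^j\Pp(Y_1>2^j)$ is unbounded, a choice of the form $m_j\approx 2^jb_j$, with $b_j$ summable and $\sum_j a_jb_j=\infty$, works. In the bounded case, dyadic repetition is too coarse. I would first refine the level sets, using values $u$ near $1/y$ distributed according to the law of $Y_1$ itself. The aim is an Abel--Dini type choice that makes $\sum_k u_k$ converge while $\sum_k\Pp(Y_1>1/u_k)$ diverges like $\int\Pp(Y_1>y)\,dy/S(y)$. If that fails, I would pass to the subsequence $n=2^k$ instead of primes, as in Lemma \ref{necessity}, and quote the disc analogue of this necessity from \cite{CCFL,CL}. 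This yields $X_1\in L^q(\Omega)$.

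\textbf{Step 3: mean zero when $2<p<\infty$, $2\leq q<\infty$.} Let $\mu=\E X_1$; this is finite because $q\geq 2$ and Step 2 give $X_1\in L^q\subset L^1$. Write $\R f=\mu f+\R_{X-\mu}f$. The variables $X_n-\mu$ are i.i.d., centred, and lie in $L^t$ with $t=\max\{2,q\}=q$. So Theorem \ref{mixed-suff1}(2) gives $\R_{X-\mu}f\in\h^{p,q}_\alpha$ a.s. for every $f\in\h^{2,q}_\alpha$; this is exactly where $q\geq2$ is used. The hypothesis then forces $\mu f\in\h^{p,q}_\alpha$ for all $f\in\h^{2,q}_\alpha$. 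It remains to exhibit $f\in\h^{2,q}_\alpha\setminus\h^{p,q}_\alpha$ when $p>2$. I would take $f=\sum_k c_kP_k$ with Dirichlet polynomials $P_k$ supported in $I_k$ and $\|P_k\|_{\h^p}\geq R_k\|P_k\|_{\h^2}$, $R_k\to\infty$; for instance, powers of $\tfrac{1}{\sqrt{2}}(1+\mathfrak{p}_{(j)}^{-s})$ over several small primes, shifted into $I_k$ by a monomial factor. The coefficients $c_k$ are then tuned so that the $\h^{2,q}_\alpha$ norm is finite while the $\h^{p,q}_\alpha$ norm diverges. Hence $\mu=0$.
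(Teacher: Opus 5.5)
Your architecture is the paper's. You test the inclusion on series with one frequency in each block $I_k$, so that the hypothesis becomes exactly the hypothesis of Lemma \ref{L1-integrable} for the i.i.d. sequence $Y_k=|X_{\mathfrak p_{(k)}}|^q$. You then subtract the mean and apply Theorem \ref{mixed-suff1}(2) against some $g\in\h^{2,q}_{\alpha}\setminus\h^{p,q}_{\alpha}$. The paper uses the frequencies $2^{2^n}$ and \cite[Lemma 4.2]{CGW}. Your primes with Steinhaus--Khintchine and Theorem \ref{log-dyadic-characterization} reach the same weighted $\ell^q$ condition just as validly and more self-containedly.

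\textbf{The gap is Step 2.} Your Borel--Cantelli strategy cannot work in the case you call bounded, and no refinement of $u$ rescues it. Suppose $2^j\Pp(Y_1>2^j)\le C$ for all $j$. Then $\Pp(Y_1>y)\le 2C/y$ for $y\ge1$. So for every $u\in\ell^1_+$ (without loss of generality $u_k\le 1$) you get $\sum_k\Pp(Y_1>1/u_k)\le 2C\sum_k u_k<\infty$. Hence the events $\{u_kY_k>1\}$ occur only finitely often a.s.; think of $\Pp(Y_1>y)=1/y$ for $y\ge1$. Your proposed Abel--Dini density fails for the same reason.

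The divergence must come from many moderate terms, i.e. from truncated first moments (the second condition in Kolmogorov's three-series theorem). Concretely, let $W_k\in[0,1]$ be independent. Convexity gives $\E e^{-W_k}\le 1-(1-e^{-1})\E W_k$. Hence
\[
\E e^{-\sum_k W_k}\le\exp\Bigl(-(1-e^{-1})\sum_k\E W_k\Bigr),
\]
so $\sum_kW_k=\infty$ a.s. whenever $\sum_k\E W_k=\infty$. Apply this to $W_k=(u_kY_k)\wedge 1$. Then $\E W_k=u_kS(1/u_k)$ with $S(y)=\E[Y_1\wedge y]\uparrow\E Y_1=\infty$. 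Choose $y_i\ge 2^i$ with $S(y_i)\ge 2^i$, and give the value $1/y_i$ multiplicity $\lceil 2^{-i}y_i\rceil$. Then $\sum_k u_k\le\sum_i(2^{-i}+y_i^{-1})<\infty$, while each block contributes at least $2^{-i}S(y_i)\ge 1$ to $\sum_k\E W_k$. This settles every $Y_1\notin L^1$, bounded case or not.

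Alternatively, simply invoke Lemma \ref{L1-integrable} as the paper does. Your Step 1 already produces its hypothesis verbatim, so there is no need to fall back to $n=2^k$ or to a disc theorem.

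\textbf{A smaller point in Step 3.} Tuning the $c_k$ so that $\|\sum_k c_kP_k\|_{\h^{p,q}_{\alpha}}=\infty$ needs a lower bound for $\|\sum_k c_k(P_k)_{\sigma}\|_{\h^p}$ across blocks. For $p>2$ this is not automatic when the $P_k$ share small primes. Any one of the following closes it:
\begin{enumerate}
\item Quote the strict inclusion $\h^{p,q}_{\alpha}\subsetneq\h^{2,q}_{\alpha}$ from the proof of \cite[Theorem 1.6]{CGW}, as the paper does.
\item Use the closed graph theorem (both spaces embed continuously into analytic functions on $\C_{1/2}$). Then it suffices to find single polynomials with $\|P_k\|_{\h^{p,q}_{\alpha}}/\|P_k\|_{\h^{2,q}_{\alpha}}\to\infty$.
\item Build the $P_k$, including the shifting monomials, on pairwise disjoint sets of primes. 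The blocks are then independent mean-zero functions $F_k$ on $\T^{\infty}$, and conditional expectation gives $\|\sum_j F_j\|_{L^p}\ge\|F_k\|_{L^p}$ for each $k$.
\end{enumerate}
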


The proof of Theorem \ref{mixed-nece} is based on the following lemma, which is contained in the proof of \cite[Proposition 16]{CL}.

\begin{lemma}\label{L1-integrable}
Let $\{X_n\}_{n\geq1}$ be a sequence of non-constant i.i.d. random variables. If $\sum_{n=1}^{\infty}|c_nX_n|<\infty$ a.s. for any $\{c_n\}_{n\geq1}\in l^1$, then $X_1\in L^1(\Omega)$.
\end{lemma}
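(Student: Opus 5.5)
The plan is to argue by contradiction: assume $\E|X_1|=\infty$, use the hypothesis and a Baire category argument on $l^1$ to get a \emph{uniform} stochastic bound on $\sum_n|c_nX_n|$ over a ball in $l^1$, and then test that bound against normalized block sums, which blow up by the strong law of large numbers. The non-constancy assumption is not needed: a constant $X_1$ is trivially integrable.

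\textbf{Step 1 (lower semicontinuity).} For $c=\{c_n\}\in l^1$ put $S(c):=\sum_{n=1}^{\infty}|c_nX_n|\in[0,\infty]$. By hypothesis $S(c)<\infty$ a.s. for every $c\in l^1$. For fixed $\omega$, $c\mapsto S(c)(\omega)$ is a supremum of continuous functions of $c$ (the partial sums), hence lower semicontinuous on $l^1$. Consequently, if $c^{(j)}\to c$ in $l^1$, then
$$\mathbf 1_{\{S(c)>M\}}\le\liminf_j \mathbf 1_{\{S(c^{(j)})>M\}}$$
pointwise, and Fatou's lemma gives $\Pp(S(c)>M)\le\liminf_j\Pp(S(c^{(j)})>M)$.

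\textbf{Step 2 (Baire).} For $M\in\mathbb N$ define $F_M:=\{c\in l^1:\Pp(S(c)>M)\le 1/4\}$. By Step 1 each $F_M$ is closed. Since $S(c)<\infty$ a.s., we have $\Pp(S(c)>M)\to0$ as $M\to\infty$, so $\bigcup_M F_M=l^1$. The Baire category theorem then gives some $M$, some $c^0\in l^1$ and some $r>0$ with $B(c^0,r)\subset F_M$.

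For $\|c\|_{l^1}<r$, the triangle inequality gives $S(c)\le S(c^0+c)+S(c^0)$ pointwise, and both $c^0+c$ and $c^0$ lie in $F_M$. Hence
$$\Pp(S(c)>2M)\le\Pp(S(c^0+c)>M)+\Pp(S(c^0)>M)\le\tfrac12 \qquad\text{for all } \|c\|_{l^1}<r.$$

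\textbf{Step 3 (testing and the SLLN).} For $N\ge1$ take $c=\frac{r}{2N}(\underbrace{1,\dots,1}_{N},0,0,\dots)$, which has $l^1$-norm $r/2<r$. Step 2 yields, for every $N$,
$$\Pp\Bigl(\tfrac1N\textstyle\sum_{n=1}^{N}|X_n|>\tfrac{4M}{r}\Bigr)\le\tfrac12 .$$
On the other hand, if $\E|X_1|=\infty$, truncate: for each $K$ the strong law of large numbers gives
$$\liminf_N\frac1N\sum_{n\le N}|X_n|\ge\E\min(|X_1|,K)\quad\text{a.s.}$$
Letting $K\to\infty$, we get $\frac1N\sum_{n\le N}|X_n|\to\infty$ a.s. Then the probability in the display tends to $1$, a contradiction. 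Hence $X_1\in L^1(\Omega)$.

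The main obstacle is the passage from the qualitative hypothesis (finiteness a.s. for each individual $c$) to a uniform quantitative bound; Step 2 handles it. The delicate points there are the closedness of $F_M$ (Step 1) and the use of the triangle inequality for $S$ to recentre the Baire ball at the origin. A direct construction of a single bad sequence $c\in l^1$ through Borel--Cantelli is also conceivable, but it requires tuning $c$ to the tail function $\Pp(|X_1|>x)$, which is awkward. The category argument avoids that tuning.
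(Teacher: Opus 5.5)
Your proof is correct and complete. The paper gives no argument of its own for this lemma; it only points to the proof of \cite[Proposition 16]{CL}, so your write-up is a self-contained alternative rather than a reconstruction of an in-paper proof.

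The steps all check out. The partial sums make $c\mapsto S(c)(\omega)$ lower semicontinuous on $l^1$, and Fatou then gives closedness of $F_M$. Baire's theorem together with the pointwise bound $S(c)\le S(c^0+c)+S(c^0)$ gives $\Pp(S(c)>2M)\le 1/2$ for all $\|c\|_{l^1}<r$. The flat block vectors combined with the truncated strong law contradict this bound when $\E|X_1|=\infty$. You are also right that non-constancy plays no role here.

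Your route is a Banach--Steinhaus-type category argument. Its virtue is that it uses nothing about the law of $X_1$ beyond the strong law of large numbers. Its cost is that it is non-constructive.

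The direct construction you set aside is in fact only a few lines. Put $\psi(t):=\E\min(|X_1|,1/t)$, which increases to $\infty$ as $t\downarrow0$ by monotone convergence. Pick $t_k\le 2^{-k}$ with $\psi(t_k)\ge 2^k$, and let block $k$ of $c$ consist of $N_k=\lceil 2^{-k}/t_k\rceil$ entries equal to $t_k$. Then $\|c\|_{l^1}=\sum_k N_kt_k\le\sum_k 2^{1-k}<\infty$, while
$$\sum_n\E\min(|c_nX_n|,1)=\sum_k N_kt_k\psi(t_k)\ge\sum_k 1=\infty.$$
For independent $[0,1]$-valued summands, a divergent mean series forces $\sum_n\min(|c_nX_n|,1)=\infty$ almost surely. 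This follows from Chebyshev, since each variance is at most the corresponding mean, or from the three-series theorem.

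This produces a single explicit $c\in l^1$ with $\sum_n|c_nX_n|=\infty$ almost surely, which is slightly more informative than the contradiction your category argument yields. Since $\{S(c)=\infty\}$ is a tail event, Kolmogorov's zero-one law makes the two conclusions equivalent here. So the tuning to the tail of $X_1$ is not really awkward; the two routes are about equally short.
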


\begin{proof}[Proof of Theorem \ref{mixed-nece}]
Let $f(s)=\sum_{n=1}^{\infty}a_n2^{-2^ns}$. Then by \cite[Lemma 4.2]{CGW}, $f\in\h^{p,q}_{\alpha}$ if and only if $\{2^{-(\alpha+1)n/q}a_n\}_{n\geq1}\in l^q$. Write $b_n=2^{-(\alpha+1)n/q}a_n$ for $n\geq1$. Since $\h^{2,q}_{\alpha}\subset(\h^{p,q}_{\alpha})_{\star}$, we have $\sum_{n=1}^{\infty}|b_nX_n|^q<+\infty$ a.s. whenever $\{b_n\}_{n\geq1}\in l^q$. Hence by Lemma \ref{L1-integrable}, $X_1\in L^q(\Omega)$.

Suppose now that $2<p<\infty$ and $2\leq q<\infty$. Assume that $\E X_1\neq0$, and let $X_n'=X_n-\E X_n$. Then $\E X_n'=0$. Since $2<p$, the proof of \cite[Theorem 1.6]{CGW} indicates that $\h^{p,q}_{\alpha}\subsetneq\h^{2,q}_{\alpha}$. Choose
$$g(s)=\sum_{n=1}^{\infty}c_nn^{-s}\in\h^{2,q}_{\alpha}\setminus\h^{p,q}_{\alpha}.$$
Then by Theorem \ref{mixed-suff1}, $\sum_{n=1}^{\infty}c_nX_n'n^{-s}\in \h^{p,q}_{\alpha}$ a.s., and so
$$\R g(s)=\sum_{n=1}^{\infty}c_nX_nn^{-s}=\sum_{n=1}^{\infty}c_nX_n'n^{-s}+\left(\E X_1\right)\sum_{n=1}^{\infty}c_nn^{-s}\notin\h^{p,q}_{\alpha}\quad \text{a.s.}$$
This is a contradiction.
\end{proof}

We finally give a sufficient condition for $(\h^{p,q}_{\alpha})_{\star}\subset\h^{2,q}_{\alpha}$.

\begin{theorem}\label{mixed-suff2}
Let $0<p,q<\infty$, $\alpha>-1$, and let $\{X_n\}_{n\geq1}$ be a sequence of non-constant i.i.d. random variables. If $X_1\in L^4(\Omega)$, then $(\h^{p,q}_{\alpha})_{\star}\subset\h^{2,q}_{\alpha}$.
\end{theorem}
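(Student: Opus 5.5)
Fix $f(s)=\sum_{n\ge1}a_nn^{-s}\in(\h^{p,q}_{\alpha})_{\star}$; we must show $f\in\h^{2,q}_{\alpha}$. We first symmetrize. Let $\{\widetilde X_n\}$ be an independent copy of $\{X_n\}$ and put $Z_n=X_n-\widetilde X_n$. Then $\sum a_nZ_nn^{-s}\in\h^{p,q}_{\alpha}$ a.s., and the $Z_n$ are i.i.d., symmetric, non-degenerate (because $X_1$ is non-constant), and satisfy $Z_1\in L^4(\Omega)$. Lemma \ref{mixed-symmetric} then gives $\R_Zf\in\h^{2,q}_{\alpha}$ a.s. It therefore suffices to prove the following claim: if $\{Z_n\}$ are i.i.d., non-degenerate, and in $L^4$, and $\R_Zf\in\h^{2,q}_{\alpha}$ a.s., then $f\in\h^{2,q}_{\alpha}$.

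To prove the claim we use Theorem \ref{log-dyadic-characterization}. Set $A_k=\sum_{n\in I_k}|a_n|^2$ and $S_k=\sum_{n\in I_k}|a_n|^2|Z_n|^2$. Note that $I_k$ is finite and $I_0\subset\{3,\dots\}$, while $n=1,2$ play no role. The hypothesis becomes $\sum_k2^{-k(\alpha+1)}S_k^{q/2}<\infty$ a.s., and the goal is $\sum_k2^{-k(\alpha+1)}A_k^{q/2}<\infty$. The $S_k$ are independent nonnegative variables with $\E S_k=\E|Z_1|^2\,A_k$ and $\E S_k^2\le\E|Z_1|^4\,A_k^2$. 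By Paley--Zygmund,
$$\Pp\bigl(S_k\ge\tfrac12\E|Z_1|^2A_k\bigr)\ge\frac{(\E S_k)^2}{4\E S_k^2}\ge\delta:=\frac{(\E|Z_1|^2)^2}{4\E|Z_1|^4}>0.$$
This is exactly where the fourth moment is needed, and the bound is uniform in $k$ and in the coefficients. Hence $S_k\ge cA_k$ on an event $E_k$ with $\Pp(E_k)\ge\delta$, and the events $E_k$ are independent.

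Now let $w_k=2^{-k(\alpha+1)}A_k^{q/2}$. Almost surely $\sum_kw_k\mathbf 1_{E_k}\le c^{-q/2}\sum_k2^{-k(\alpha+1)}S_k^{q/2}<\infty$. It remains to see that $\sum_kw_k\mathbf 1_{E_k}<\infty$ a.s. forces $\sum w_k<\infty$, and this is the main obstacle. Here we use independence together with the three-series theorem (or Kolmogorov's 0--1 law plus a truncation argument). Suppose $\sum w_k=\infty$ and put $Y_k=w_k\mathbf 1_{E_k}$. If $w_k\ge1$ for infinitely many $k$, then $\sum\Pp(Y_k\ge1)\ge\delta\cdot\#\{k:w_k\ge1\}=\infty$, so by Borel--Cantelli II, $Y_k\ge1$ infinitely often and the series diverges. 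Otherwise $w_k<1$ for all large $k$, and the truncated means satisfy $\sum_k\E\min(Y_k,1)\ge\delta\sum_{k\text{ large}}w_k=\infty$, so the three-series theorem gives divergence a.s. Either way we contradict a.s. convergence, so $\sum w_k<\infty$ and $f\in\h^{2,q}_{\alpha}$ by Theorem \ref{log-dyadic-characterization}.

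Finally, the reduction to the symmetrized sequence is sound: the hypothesis that $X_1$ is non-constant is used only to get $\E|Z_1|^2>0$, which is what makes $\delta$ positive, and $L^4$ passes from $X_1$ to $Z_1$.
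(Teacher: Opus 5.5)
Your proof is correct and follows essentially the same route as the paper. You symmetrize with an independent copy, use Lemma~\ref{mixed-symmetric} to pass from $\h^{p,q}_{\alpha}$ to $\h^{2,q}_{\alpha}$, and then run a fourth-moment argument on the logarithmic dyadic block sums of Theorem~\ref{log-dyadic-characterization}. The differences are minor: you symmetrize at the outset rather than treating the symmetric case first, and you write out the block argument explicitly (Paley--Zygmund, independence across blocks, then Borel--Cantelli and the three-series theorem), whereas the paper only cites the proof of Theorem 6 in [CL] for it.
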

\begin{proof}
Assume first that $X_1$ is symmetric. Then by Lemma \ref{mixed-symmetric}, $(\h^{p,q}_{\alpha})_{\star}=(\h^{2,q}_{\alpha})_{\star}$. Using Theorem \ref{log-dyadic-characterization} and following the argument in the proof of \cite[Theorem 6]{CL}, we can obtain that
$$(\h^{p,q}_{\alpha})_{\star}=(\h^{2,q}_{\alpha})_{\star}\subset\h^{2,q}_{\alpha}.$$
If $X_1$ is non-symmetric, let $\{\widetilde{X}_n\}_{n\geq1}$ be an independent copy of $\{X_n\}_{n\geq1}$. For any $\sum_{n=1}^{\infty}a_nn^{-s}\in(\h^{p,q}_{\alpha})_{\star}$, we have $\sum_{n=1}^{\infty}a_nX_nn^{-s}\in\h^{p,q}_{\alpha}$ a.s. and $\sum_{n=1}^{\infty}a_n\widetilde{X}_nn^{-s}\in\h^{p,q}_{\alpha}$ a.s. Consequently,
$$\sum_{n=1}^{\infty}a_n(X_n-\widetilde{X}_n)n^{-s}\in\h^{p,q}_{\alpha}\quad \text{a.s.}$$
Therefore, by the symmetric case, we obtain $\sum_{n=1}^{\infty}a_nn^{-s}\in \h^{2,q}_{\alpha}$. The proof is complete.
\end{proof}

\medskip





\end{document}